\documentclass[12pt,a4paper,leqno]{article}

\usepackage[utf8]{inputenc}
\usepackage[T1]{fontenc}
\usepackage[english]{babel}
\usepackage{amsthm}
\usepackage{amsfonts}         
\usepackage{amsmath}
\usepackage{amssymb}
\usepackage{hyperref}
\usepackage{amscd}
\usepackage{enumerate}
\usepackage{tikz}
\usepackage{mathrsfs}  
\usetikzlibrary{matrix, arrows, decorations.pathmorphing}
\usepackage{hyperref}

\newcommand{\fref}[1]{\hyperref[{#1}]{\ref*{#1}}}

\newcommand{\Ab}{\mathbb{A}}

\newcommand{\Eb}{\mathbb{E}}

\newcommand{\Nb}{\mathbb{N}}

\newcommand{\Pb}{\mathbb{P}}

\newcommand{\Zb}{\mathbb{Z}}

\newcommand{\Cc}{\mathcal{C}}

\newcommand{\Fc}{\mathcal{F}}
\newcommand{\Gc}{\mathcal{G}}
\newcommand{\Hc}{\mathcal{H}}

\newcommand{\Kc}{\mathcal{K}}

\newcommand{\Mc}{\mathcal{M}}

\newcommand{\Oc}{\mathcal{O}}

\newcommand{\Sc}{\mathcal{S}}

\newcommand{\Zc}{\mathcal{Z}}

\newcommand{\Ls}{\mathscr{L}}

\newcommand{\Proj}{\mathbb{P}}

\newcommand{\OO}{\mathcal{O}}

\newcommand{\Li}{\mathscr{L}}

\newcommand{\coim}{\mathrm{\coim}}

\newcommand{\Spec}{\mathrm{Spec}}

\newcommand{\QCoh}{\mathrm{QCoh}}
\newcommand{\Perf}{\mathrm{Perf}}

\newtheorem{theo}{Tplottin ubuntuheorem}[section]
\theoremstyle{plain}
\newtheorem{thm}[theo]{Theorem}
\newtheorem{lem}[theo]{Lemma}
\newtheorem{prop}[theo]{Proposition}
\newtheorem{cor}[theo]{Corollary}
\newtheorem*{thm*}{Theorem}
\newtheorem*{lem*}{Lemma}
\newtheorem*{prop*}{Proposition}
\newtheorem*{cor*}{Corollary}

\theoremstyle{definition}
\newtheorem{defn}[theo]{Definition}

\newtheorem{ex}[theo]{Example}

\theoremstyle{remark}
\newtheorem{rem}[theo]{Remark}

\pagestyle{plain}
\setcounter{page}{1}
\addtolength{\hoffset}{-1.15cm}
\addtolength{\textwidth}{2.3cm}
\addtolength{\voffset}{0.45cm}
\addtolength{\textheight}{-0.9cm}

\title{Ample line bundles, global generation and $K^0$ on quasi-projective derived schemes}
\author{Toni Annala}
\date{}
\begin{document}

\maketitle
\begin{abstract}
The purpose of this article is to extend some classical results on quasi-projective schemes to the setting of derived algebraic geometry. Namely, we want to show that any vector bundle on a derived scheme admitting an ample line bundle can be twisted to be globally generated. Moreover, we provide a presentation of $K^0(X)$ as the Grothendieck group of vector bundles modulo exact sequences on any quasi-projective derived scheme $X$.
\end{abstract}

\tableofcontents

\section{Introduction}

The purpose of this paper is to give proofs for some general facts concerning quasi-projective derived schemes stated and used in \cite{An}. In that paper, the author constructed a new extension of algebraic cobordism from smooth quasi-projective schemes to all quasi-projective (derived) schemes. The main result of the paper is that the newly obtained extension specializes to $K^0(X)$ -- the $0^{th}$ algebraic $K$-theory of the scheme $X$, giving strong evidence that the new extension is the correct one. All previous extensions (operational, $\Ab^1$-homotopic) fail to have this relation with the Grothendieck ring $K^0$.

A crucial role in the proof is played by the construction of Chern classes for $\Omega^*$. The construction used the following key properties that are known to hold classically
\begin{enumerate}
\item Any vector bundle on a quasi-projective scheme can be made globally generated by tensoring it with a line bundle.

\item The finite Grassmannians represent the functor assigning to each $X$ the set of tuples $(E,s_1,...,s_n)$ of a vector bundle of rank $r$ and a sequence $s_1,...,s_n$ of global sections that generate.
\end{enumerate}
In addition, we needed the following result
\begin{enumerate}
\item[3.] If $X$ is quasi-projective, then the group $K^0(X)$ is the Grothendieck group of vector bundles, i.e., it is the Abelian group generated by equivalence classes of vector bundles modulo relations coming from short exact sequences.
\end{enumerate}
The purpose of this paper is to extend the facts 1 (Corollary \fref{GlobalGeneration}) and 3 (Theorem \fref{KTheoryTheorem}) to their natural analogues for derived schemes; fact 2 was already extended in \cite{An} (essentially already in \cite{Lur3}). The results, while being direct analogues of classical results, are of great practical value when one wants to work geometrically with derived algebraic geometry.

\section{Background}

We now recall some necessary background material.

\subsection{$\infty$-Categories}

In accordance with most of the modern work on derived algebraic geometry, we will cast our results in the language of $\infty$-categories. By an $\infty$-category, we mean more precisely an $(\infty,1)$-category, i.e., all higher homotopies are invertible. There are multiple different models for the theory of $\infty$-categories, and the choice is not very important for the results of this paper, but as our main references will be \cite{Lur1}, \cite{Lur2} and \cite{Lur3}, we are going to settle with the language of quasi-categories.

In the theory of categories, the category of sets plays a distinguished role. For example, the morphisms between two objects in a category forms a set, Yoneda-embedding realizes any category as a subcategory of functors taking values in sets etc. In the theory of $\infty$-categories, a similar role is played by the $\infty$-category of spaces. We now review some more specific details from the theory.

Throughout this paper, we are going to call $\infty$-categorical (co)limits \emph{homotopy (co)limits}. This is done mainly in order to avoid confusion when working with objects for which both concepts would make sense.  Similarly, we will call the $\infty$-categorical analogues of sheaves \emph{stacks}. On a topological space $X$, a stack $\Fc$ is a presheaf taking values in some $\infty$-category satisfying homotopical analogue of the sheaf condition. Essentially this boils down to requiring the square
\begin{center}
\begin{tikzpicture}[scale = 1.5]
\node (A2) at (2,2) {$\Gamma(U_1; \Fc)$};
\node (B2) at (2,1) {$\Gamma(U_1 \cap U_2; \Fc)$};
\node (A1) at (0,2) {$\Gamma(U_1 \cup U_2; \Fc)$};
\node (B1) at (0,1) {$\Gamma(U_2; \Fc)$};
\path[every node/.style={font=\sffamily\small}]
(A1) edge[->] (A2)
(B1) edge[->] (B2)
(A1) edge[->] (B1)
(A2) edge[->] (B2)
;
\end{tikzpicture}
\end{center}
to be homotopy Cartesian for any two open sets $U_1, U_2$. 

Suppose $\Cc$ is a pointed $\infty$ category. For a morphism $X \to Y$, we define the \emph{homotopy cofibre} $C$ as the homotopy pushout
\begin{center}
\begin{tikzpicture}[scale = 1.5]
\node (A2) at (1,2) {$Y$};
\node (B2) at (1,1) {$C$};
\node (A1) at (0,2) {$X$};
\node (B1) at (0,1) {$0$};
\path[every node/.style={font=\sffamily\small}]
(A1) edge[->] (A2)
(B1) edge[->] (B2)
(A1) edge[->] (B1)
(A2) edge[->] (B2)
;
\end{tikzpicture}
\end{center}
Dually, one may define the \emph{homotopy fibre} of a morphism as a homotopy pullback. A sequence $X \to Y \to Z$ is called a \emph{homotopy cofibre sequence} if $Z$ is a model to the homotopy cofibre of $X \to Y$. Similarly, such a sequence is a \emph{homotopy fibre sequence} if $X$ is a model for the homotopy fibre of $Y \to Z$.  
\begin{defn}
A pointed $\infty$-category $\Cc$ is called \emph{stable} if it has all homotopy (co)fibres, and if the homotopy fibre and cofibre sequences of $\Cc$ coincide.
\end{defn}
The notion of a stable $\infty$-category is an enhancement of the notion of a triangulated category in the sense that the homotopy category naturally admits a structure of a triangulated category. A direct consequence of the definition is that homotopy cofibre sequences are preserved in homotopy pushouts and homotopy pullbacks.

\subsection{Derived Schemes}

While there are many equivalent ways to define the notion of a derived scheme, the following (cf. \cite{Lur3} Definition 1.1.2.8., \cite{KST} Definition 2.1) seems to be best for the purposes of this paper.

\begin{defn}
A \emph{derived scheme} $X$ is a topological space $X_\mathrm{top}$ together with a hypercomplete stack $\Oc_X$ of simplicial commutative rings. We moreover assume that 
\begin{enumerate}
\item[(i)] The \emph{truncation} $tX := (X, \pi_0 \Oc_X)$ is a scheme,
\item[(ii)] the sheaves $\pi_n\Oc_X$ are quasi-coherent over $\pi_0 \Oc_X$.
\end{enumerate}
\end{defn} 
In this article we will only deal with schemes with a Noetherian and finite dimensional underlying space, and hence the hypercompleteness assumption is automatically satisfied. Moreover, we will work over a field $k$ of characteristic 0, where derived schemes may equivalently be characterized as topological spaces equipped with a stack of commutative and connective differential graded $k$-algebras satisfying analogues of the conditions (i) and (ii) above. In the sequel we will often talk about \emph{derived rings}, by which we will mean either a connective dg-algebra or a simplicial ring over $k$ (or even connective $\Eb_\infty$-ring spectra over $k$).

Any commutative derived ring defines an \emph{affine derived scheme} $\Spec(A)$. The full sub $\infty$-category of affine derived schemes is equivalent to the opposite $\infty$-category of derived rings. It is known that a derived scheme $X$ is equivalent to an affine one if and only if the truncation $(X_\mathrm{top}, \pi_0 \Oc_X)$ is isomorphic to an affine scheme.

\begin{defn}
A $(1-)$morphism $f: X \to Y$ between derived schemes is a continuous map $f: X_\mathrm{top} \to Y_\mathrm{top}$ of topological spaces and a map $f^\sharp:\Oc_Y \to f_* \Oc_X$ of stacks so that $(f, \pi_0 f^\sharp)$ defines a map of schemes.

A morphism $f: X \to Y$ is \emph{affine} if for all maps $\Spec(A) \to Y$ from an affine derived scheme, the homotopy pullback $X \times^h_Y \Spec(A)$ is affine. The morphism is moreover a \emph{closed embedding} if the induced map $\Spec(B) \to \Spec(A)$ determines a surjective map $\pi_0 A \to \pi_0 B$ of rings (cf. \cite{TV2} Definition 2.2.3.5. (2)).
\end{defn}

\begin{rem}
It is clear that a map $X \to Y$ between derived schemes is a closed embedding exactly when the induced map between truncations is a closed embedding of schemes.
\end{rem}

\begin{defn}
A morphism $X \to Y$ of derived $k$-schemes is \emph{quasi-projective} if it factors through a closed embedding $X \hookrightarrow U \times Y$, where $U$ is an open subscheme of some projective scheme $\Proj^n$. A derived $k$-scheme $X$ is \emph{quasi-projective} if the structure morphism $X \to pt$ is. 
\end{defn}

\begin{rem}
We note that any quasi-projective derived scheme $X$ is \emph{separated}, i.e., the diagonal map $X \to X \times_k X$ is a closed embedding. A standard consequence is that the intersection of finite number of affine opens is again an affine open. The underlying topological space $X$ is always compact, so $X$ is \emph{quasi-compact}.
\end{rem}

\begin{defn}
We say that a derived scheme $X$ is \emph{Noetherian} (cf. \cite{KST} Definition 2.4) if the truncation $tX$ is Noetherian, and if the homotopy sheaves $\pi_i(\Oc_X)$ are finite type over $\pi_0(\Oc_X)$. Note that for quasi-projective derived schemes the first assumption is automatically satisfied, so for them the definition really just boils down to a finiteness condition on the structure stack $\Oc_X$.
\end{defn}

\subsection{Quasi-Coherent Sheaves}

As any derived scheme $X$ has the underlying topological space $X_\mathrm{top}$, it makes sense of talking about stacks on $X$ other than the structure stack $\Oc_X$. An important class of such objects is given by the quasi-coherent sheaves on $X$. For an affine scheme $\Spec(A)$, the stable $\infty$-category $\QCoh(\Spec(A))$ of quasi-coherent sheaves is equivalent to the $\infty$-category of (unbounded) $A$-dg-modules. Moreover, $\QCoh$ defines a stack of stable $\infty$-categories on the $\infty$-category of derived schemes (with respect to faithfully flat topology), which means, among other things, that $\QCoh(X)$ of a separated derived scheme $X$ is obtained by gluing the module categories associated to some affine open cover of $X$.

For the practical purposes of the paper, it is necessarily only to note that a quasi-coherent sheaf $\Fc$ on $X$ is a stack of unbounded chain-complexes of Abelian groups which, when restricted to an affine open subset, is completely determined by a single dg-module. As homotopy limits of dg-modules can be computed as homotopy limits of Abelian chain complexes (or even as homotopy limits of spectra), and as we will mainly be interested in understanding global sections in terms of data, any structure makes little or no difference for us. Like in the classical case, we often want to deal with quasi-coherent sheaves with certain finiteness properties.

\begin{defn}
Suppose $X$ is a quasi-projective derived scheme. We say that a quasi-coherent sheaf $\Fc$ is \emph{coherent} if it is eventually connective (meaning that $\pi_i(\Fc) = 0$ for $i \ll 0$) and the homotopy sheaves $\pi_i \Fc$ are coherent over the truncation $\pi_0 X$. 
\end{defn}
\begin{rem}
As quasi-projective derived schemes satisfy certain finiteness conditions, the above definition coincides with the definition of \emph{almost perfect} sheaves in \cite{Lur2} and \cite{Lur3}.
\end{rem}

Another special class of quasi-coherent sheaves, used in the definition of algebraic $K$-theory, are the perfect sheaves. If $A$ is a derived ring, then an $A$-module $M$ is called \emph{perfect} if it is a retract of a finite homotopy colimit of shifts of $A$ (equivalently $M$ is a compact module). A quasi-coherent sheaf $\Fc$ is \emph{perfect} if the restriction $\Fc \vert _U$ is perfect module for all affine opens $U \subset X$ (or equivalently, for all affine opens in a cover). As our derived rings are assumed to be connective, it follows that all perfect sheaves are eventually connective. For nice enough derived schemes, perfect sheaves have a nice alternative description:

\begin{prop}[cf. \cite{Lur3} Proposition 9.6.1.1.]
If $X$ is a quasi-compact and quasi-separated derived scheme (e.g., a quasi-projective derived scheme), then the perfect sheaves are exactly the compact objects of $\QCoh(X)$. Moreover, $\QCoh(X)$ is compactly generated.
\end{prop}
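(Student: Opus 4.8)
The plan is to reduce to the affine case, which is standard, and then globalize along a finite affine cover, using the stability of $\QCoh$ in an essential way. For $X = \Spec(A)$ affine we have $\QCoh(X) \simeq \mathrm{Mod}_A$, and the free module $A$ is a compact generator: the functor $\Hom(A, -)$ is the forgetful functor to chain complexes, which preserves all colimits and is conservative. The thick subcategory generated by $A$ under shifts, finite colimits and retracts is by definition $\Perf(A)$, so the affine case is exactly the assertion that $\mathrm{Mod}_A$ is compactly generated with compact objects the perfect modules (cf. \cite{Lur2}). For general quasi-compact, quasi-separated $X$ I would establish two statements separately: (i) every perfect sheaf is compact, and (ii) perfect sheaves generate $\QCoh(X)$. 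Granting these, $\QCoh(X)$ is compactly generated by a set of perfect sheaves, and the general characterization of the compact objects of a compactly generated stable $\infty$-category as the thick closure of any set of compact generators (cf. \cite{Lur2}) identifies the subcategory of compact objects with the thick subcategory generated by perfect sheaves; since perfection is local and closed under shifts, finite colimits and retracts, this thick subcategory is $\Perf(X)$, yielding both halves of the statement at once.

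Statement (i) is where the finiteness hypotheses enter. A perfect sheaf $P$ is dualizable with dual $P^\vee$, so that $\Hom_{\QCoh(X)}(P, \Fc) \simeq \Gamma(X, P^\vee \otimes \Fc)$; since $- \otimes P^\vee$ preserves filtered colimits, it suffices to show that $\Gamma(X, -)$ does. Choosing a finite affine open cover and arguing by induction on its size, the inductive step is a Mayer--Vietoris square expressing $\Gamma(X, \Fc)$ as the fibre product $\Gamma(U, \Fc) \times_{\Gamma(U \cap V, \Fc)} \Gamma(V, \Fc)$, where $V$ is one affine and $U$ is the union of the rest. Here quasi-separatedness guarantees that $U \cap V$ is again quasi-compact and quasi-separated, covered by strictly fewer affines (in the separated --- hence quasi-projective --- case it is even affine), so the inductive hypothesis applies to all three terms. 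On an affine the global sections functor is the forgetful functor and preserves filtered colimits; and since $\QCoh(X)$ is stable, the finite limit defining the Mayer--Vietoris square commutes with filtered colimits. Hence $\Gamma(X, -)$ preserves filtered colimits and $P$ is compact.

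Statement (ii), that perfect sheaves generate $\QCoh(X)$, is the main obstacle, precisely because compactness is a priori a global condition and one cannot simply push perfect complexes forward along an open immersion $j \colon U \hookrightarrow X$, since $j_*$ need not preserve perfection. I would induct along a finite filtration of $X$ by quasi-compact opens $\emptyset = U_0 \subseteq \cdots \subseteq U_n = X$ (a scallop decomposition), each step of which glues an affine $V$ to the previous open $U$ along a quasi-compact open $W = U \cap V$; the base case is the affine one already treated. The gluing step uses the descent equivalence $\QCoh(X) \simeq \QCoh(U) \times_{\QCoh(W)} \QCoh(V)$ to assemble compact generators of $U$ and $V$ into compact generators of $X$. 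The crucial input --- and the genuinely hard point --- is the Thomason--Trobaugh extension result: a perfect complex on the quasi-compact open $W \subseteq V$ can, after forming a direct sum with a suitable complement, be extended to a perfect complex on the affine $V$. The extension of a single perfect complex is obstructed by a class in $K_0$, which is why one must work within the thick subcategory, up to retracts. This extension is what allows perfect generators to be transported across the gluing, and it is what makes the fibre product of the compactly generated categories again compactly generated by perfect objects. Combined with (i), this shows $\QCoh(X)$ is compactly generated by perfect sheaves and completes the proof.
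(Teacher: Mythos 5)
The paper does not actually prove this proposition: it is imported wholesale from \cite{Lur3} (Proposition 9.6.1.1), so there is no in-house argument to compare yours against. What you have written is a faithful outline of the standard proof in the literature (Thomason--Trobaugh, Neeman, Bondal--Van den Bergh, and Lurie's own treatment), and the architecture is sound: affine case via the compact generator $A$; compactness of perfect objects via dualizability, $\Hom(P,\Fc)\simeq\Gamma(X,P^\vee\otimes\Fc)$, and commutation of $\Gamma(X,-)$ with filtered colimits by Mayer--Vietoris induction over a finite affine cover; generation via induction along a scallop decomposition using the descent equivalence $\QCoh(X)\simeq\QCoh(U)\times_{\QCoh(W)}\QCoh(V)$; and the identification of the compacts with $\Perf(X)$ by combining (i) with Neeman's theorem that the compacts of a compactly generated stable $\infty$-category form the thick closure of any set of compact generators.

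Two caveats. First, in step (i) your induction parameter needs care in the merely quasi-separated case: $U\cap V$ is covered by the $n-1$ opens $U_i\cap V$, which are quasi-compact opens of an affine but need not be affine, so the induction should run over covers by quasi-compact opens that are themselves known to satisfy the conclusion (or over a scallop decomposition), rather than literally over the number of affines; you half-acknowledge this, and for the separated (hence quasi-projective) schemes the paper cares about the intersections are affine and your induction is clean as stated. Second, and more substantially, step (ii) is not a proof but a reduction: the Thomason--Trobaugh extension theorem (a perfect complex on a quasi-compact open extends to a perfect complex after adding a summand, with obstruction in $K_0$), together with the verification that orthogonality to all perfect complexes on $X$ forces vanishing on $V$ and on $U$, carries essentially all of the mathematical weight, and you invoke it rather than establish it. You correctly identify it as the hard point, and citing it is defensible for a result of this standing, but one should be clear that your argument is a correct assembly of known deep inputs rather than a self-contained proof --- which puts it on exactly the same footing as the paper's bare citation of \cite{Lur3}.
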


In the statement above, a \emph{compact object} $C$ is such that the mapping space functor $\hom (C, -)$ commutes with filtered homotopy colimits. A stable $\infty$-category is said to be \emph{compactly generated} if there exists a set $\Sc$ of compact objects whose right orthogonal vanishes, i.e., $\hom(C, M) \simeq pt$ for all $C \in \Sc$ implies that $M \simeq 0$.

\section{Strong Sheaves}

The purpose of this section is study a nicely behaved subclass of quasi-coherent sheaves: the so called \emph{strong sheaves}. The topology of such sheaves is in some sense completely determined by the topology on the structure stack $\Oc_X$, making them easier to understand. We also give a nice characterization of cofibre sequences of strong sheaves.

Recall from \cite{TV1} that a simplicial module $M$ over a simplicial commutative ring $A$ is \emph{strong} if the natural map of graded rings
\begin{equation*}
\pi_*(A) \otimes_{\pi_0(A)} \pi_0(M) \to \pi_*(M)
\end{equation*}
is an isomorphism. A simplicial $A$-algebra $B$ is \emph{strong} if it is strong considered as an $A$-module. We claim that being strong is a local property:

\begin{prop}
Let $A \to \widetilde A$ be finite étale cover of simplicial rings. Now an $A$-module $M$ is strong if and only if $\widetilde M := \widetilde A \otimes_A^L M$ is strong over $\widetilde A$.
\end{prop}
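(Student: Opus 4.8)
The plan is to reduce the claim entirely to an assertion about the graded homotopy rings and modules, exploiting that a finite étale map of simplicial rings is in particular \emph{flat}. Recall that flatness of $A \to \widetilde A$ means precisely that $\pi_0(A) \to \pi_0(\widetilde A)$ is flat and the natural map $\pi_*(A) \otimes_{\pi_0(A)} \pi_0(\widetilde A) \to \pi_*(\widetilde A)$ is an isomorphism; equivalently, $\widetilde A$ is itself strong over $A$, and consequently $\pi_*(\widetilde A)$ is a flat $\pi_*(A)$-algebra. Since the map is moreover a cover, $\pi_0(A) \to \pi_0(\widetilde A)$ is faithfully flat, a fact I will use in the backward direction. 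The crucial computational input is a flat base-change formula for homotopy: because $\pi_*(\widetilde A)$ is flat over $\pi_*(A)$, the Tor (Künneth) spectral sequence computing $\pi_*(\widetilde A \otimes_A^L M)$ degenerates at its second page and yields
\[
\pi_*(\widetilde M) \;\cong\; \pi_*(\widetilde A) \otimes_{\pi_*(A)} \pi_*(M) \;\cong\; \pi_0(\widetilde A) \otimes_{\pi_0(A)} \pi_*(M),
\]
where the last identification substitutes $\pi_*(\widetilde A) \cong \pi_0(\widetilde A) \otimes_{\pi_0(A)} \pi_*(A)$. Taking degree zero gives $\pi_0(\widetilde M) \cong \pi_0(\widetilde A) \otimes_{\pi_0(A)} \pi_0(M)$.

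With these identifications in hand, I would next show that the strongness map for $\widetilde M$ is simply the base change of the strongness map for $M$. Write $\phi_M \colon \pi_*(A) \otimes_{\pi_0(A)} \pi_0(M) \to \pi_*(M)$ for the natural map whose invertibility defines strongness of $M$, and likewise $\phi_{\widetilde M}$ over $\widetilde A$. Using $\pi_*(\widetilde A) \cong \pi_0(\widetilde A) \otimes_{\pi_0(A)} \pi_*(A)$ together with the formula for $\pi_0(\widetilde M)$, a cancellation of one copy of $\pi_0(\widetilde A)$ identifies the source of $\phi_{\widetilde M}$ with $\pi_0(\widetilde A) \otimes_{\pi_0(A)} \bigl(\pi_*(A) \otimes_{\pi_0(A)} \pi_0(M)\bigr)$, while the base-change formula identifies its target with $\pi_0(\widetilde A) \otimes_{\pi_0(A)} \pi_*(M)$. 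Under these isomorphisms the map $\phi_{\widetilde M}$ becomes $\mathrm{id}_{\pi_0(\widetilde A)} \otimes \phi_M$, i.e.\ $\phi_{\widetilde M} \cong \pi_0(\widetilde A) \otimes_{\pi_0(A)} \phi_M$.

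The two directions then follow formally. If $M$ is strong, $\phi_M$ is an isomorphism, hence so is its base change $\phi_{\widetilde M}$, and $\widetilde M$ is strong. Conversely, if $\widetilde M$ is strong, then $\pi_0(\widetilde A) \otimes_{\pi_0(A)} \phi_M \cong \phi_{\widetilde M}$ is an isomorphism; since $\pi_0(A) \to \pi_0(\widetilde A)$ is faithfully flat, faithfully flat descent lets me conclude that $\phi_M$ itself is an isomorphism, so $M$ is strong. Note that finiteness of the cover plays no essential role here: flatness drives the base-change formula and faithfulness drives the descent.

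The main obstacle is the naturality bookkeeping of the second paragraph, namely verifying precisely that the canonical strongness map $\phi_{\widetilde M}$ coincides, under the chain of base-change and flatness isomorphisms, with $\pi_0(\widetilde A) \otimes_{\pi_0(A)} \phi_M$ rather than merely being abstractly isomorphic to some such map. Establishing this compatibility requires tracking the $\pi_0(\widetilde A)$-module structures through each tensor cancellation, and rests on the naturality of the Künneth spectral sequence in $M$; once the degeneration of that spectral sequence is justified by flatness of $\pi_*(\widetilde A)$ over $\pi_*(A)$, the remainder is routine.
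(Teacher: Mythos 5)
Your proposal is correct and follows essentially the same route as the paper: both arguments use flatness of $\pi_*(\widetilde A)$ over $\pi_*(A)$ to get the base-change formula $\pi_*(\widetilde M) \cong \pi_0(\widetilde A) \otimes_{\pi_0(A)} \pi_*(M)$, identify the strongness map for $\widetilde M$ with the base change of the one for $M$ via $\pi_*(\widetilde A) \cong \pi_*(A) \otimes_{\pi_0(A)} \pi_0(\widetilde A)$, and conclude by faithful flatness of $\pi_0(A) \to \pi_0(\widetilde A)$. Your version is somewhat more explicit about the Künneth spectral sequence degeneration and the naturality bookkeeping, but the content is the same.
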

\begin{proof}
Recall that by the remark following Definition 2.2.2.12 in \cite{TV2}, $A \to \widetilde A$ is a finite étale cover if and only if $A \to \widetilde A$ is strong and the truncation morphism is a finite étale cover in the usual sense. We note that the graded ring $\pi_*(\widetilde A)$ is flat over $\pi_*(A)$, so that the natural map $\pi_*(\widetilde A) \otimes^L_{\pi_*(A)} \pi_*(M) \to \pi_*(\widetilde M)$ is an isomorphism.

Let's consider the natural map $\pi_*(\widetilde A) \otimes_{\pi_0(A)} \pi_0(M) \to \pi_*(\widetilde M)$. As we have the equality $\pi_*(\widetilde A) = \pi_*(A) \otimes_{\pi_0(A)} \pi_0(\widetilde A)$, the above map gives (after canonical equivalences)
\begin{equation*}
\pi_0(\widetilde A) \otimes_{\pi_0(A)} (\pi_*(A) \otimes_{\pi_0(A)} \pi_0(M)) \to \pi_0(\widetilde A) \otimes_{\pi_0(A)} \pi_*(M)
\end{equation*}
which proves the claim as $\pi_0 (A) \to \pi_0(\widetilde A)$ is faithfully flat.
\end{proof}

\begin{defn}
Let $\Fc$ be a quasi-coherent sheaf on a derived scheme $X$. We say that $\Fc$ is \emph{strong} if for all $\Spec(A) \to X$ the restriction $\Fc \vert_{\Spec(A)}$ of $\Fc$ to $\Spec(A)$ corresponds to a strong $A$-module. By the previous proposition, it is enough to check the condition on the affine open sets in any Zariski or étale cover.
\end{defn}

\begin{defn}\label{StrongExact}
Suppose we have a map $\Fc \to \Gc$ of strong sheaves. We say that it is an \emph{epimorphism} (\emph{monomorphism}) if the induced map on all the homotopy sheaves is an epimorphism (monomorphism). As tensor products preserve surjections, being an epi is equivalent to the induced map $\pi_0(\Fc) \to \pi_0(\Gc)$ being a surjection. 

A triangle $\Fc \to \Gc \to \Hc$ of strong sheaves is called a \emph{short exact sequence} if the composition $\Fc \to \Hc$ is nil-homotopic, and it induces short exact sequences 
\begin{equation*}
0 \to \pi_i(\Fc) \to \pi_i(\Gc) \to \pi_i(\Hc) \to 0
\end{equation*}
on homotopy sheaves. 
\end{defn}

\begin{rem}
A map $\Fc \to \Gc$ of strong sheaves is an equivalence if and only if it is a monomorphism and an epimorphism. This is equivalent to the induced map $\pi_0(\Fc) \to \pi_0(\Gc)$ being an isomorphism as tensor products preserve isomorphisms.
\end{rem}

\begin{ex}\label{VectorBundles}
\emph{Vector bundles} form an important class of strong sheaves. We recall from \cite{TV2} Section 2.2.6.1 that a \emph{rank $r$ vector bundle} is a strong sheaf $\Fc$ (in our terminology) such that for all $\Spec(A) \to X$ the $\pi_0(A)$-module $\pi_0(\Fc \vert_{\Spec(A)})$ is projective of rank $r$ for all $A$. The condition is equivalent to $\Fc$ being locally free of rank $r$.

Vector bundles are simple in many respects. First of all, they are closed under pullbacks, and the pullback of $E$ along truncation embedding $tX \hookrightarrow X$ is isomorphic to $\pi_0(E)$. Both claims fail in general (a flatness condition is required for them to hold). If we assume $X$ to have some nice finiteness properties, e.g. $X$ quasi-projective, then we see that a map of vector bundles $E \to F$ is an epimorphism exactly when the induced maps of fibres over the \emph{classical points} of $X$ is an epimorphism of vector spaces over the residue field. By classical points, we mean the points of the truncation $tX$, which canonically map to $X$.

Moreover, $E' \to E \to \overline{E}$ is a short exact sequence of vector bundles if and only if 
\begin{equation*}
0 \to \pi_0(E') \to \pi_0(E) \to \pi_0(\overline{E}) \to 0
\end{equation*}
is. Indeed, locally the short exact sequence is split, so at least all the sequences
\begin{equation*}
0 \to \pi_i(A) \otimes_{\pi_0(A)} \pi_0(E') \to \pi_i(A) \otimes_{\pi_0(A)} \pi_0(E) \to \pi_i(A) \otimes_{\pi_0(A)} \pi_0(\overline{E}) \to 0
\end{equation*}
are exact. Moreover, as the sequence is locally equivalent to
\begin{equation*}
0 \to A^{\oplus n} \to A^{\oplus n} \oplus A^{\oplus m} \to A^{\oplus m} \to 0 
\end{equation*}
where the composition $A^{\oplus n} \to A^{\oplus m}$ \emph{is} the zero morphism, we can conclude that $E \to F$ is \emph{locally} nil-homotopic. This is equivalent to saying that $E \to F$ factors locally trough $0$, which in turn implies that it factors globally trough $0$, proving that $E \to F$ is nil-homotopic. Using similar reasoning, one can conclude that the homotopy fibre of a surjective map between vector bundles is a vector bundle.

The rather simple condition on the zeroth homotopy groups also happens to be equivalent for the triangle to be a cofibre sequence -- a special case of the following proposition. 
\end{ex}

\begin{prop}\label{ExactVsCofibre}
A triangle $\Fc \to \Gc \to \Hc$ of strong sheaves is a homotopy cofibre sequence in $\QCoh(X)$ if and only if it is an exact sequence.
\end{prop}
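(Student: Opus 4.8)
The plan is to prove both directions by reducing to the local (affine) situation, where strong modules are governed entirely by their $\pi_0$ together with the graded ring $\pi_*(A)$, and then to translate the homotopy-exactness of the long exact sequence into the characterization of short exact sequences from Definition \ref{StrongExact}. Since $\QCoh(X)$ is glued from the affine module categories and homotopy cofibre sequences are detected locally (a colimit is computed on each affine open), it suffices to work over a single $\Spec(A)$ and check the statement for strong $A$-modules $M \to N \to P$.

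\medskip

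First I would handle the direction ``cofibre sequence $\Rightarrow$ exact.'' Given a homotopy cofibre sequence $\Fc \to \Gc \to \Hc$ of strong sheaves, restricting to an affine open yields a cofibre sequence of $A$-modules, which in a stable $\infty$-category is the same as a fibre sequence. Applying the homotopy groups functor produces a long exact sequence
\begin{equation*}
\cdots \to \pi_i(\Fc) \to \pi_i(\Gc) \to \pi_i(\Hc) \to \pi_{i-1}(\Fc) \to \cdots
\end{equation*}
The key point is that the connecting maps must vanish. I would argue this using strongness: since $\pi_*(M) \cong \pi_*(A) \otimes_{\pi_0(A)} \pi_0(M)$ for each module, the entire long exact sequence is obtained from the sequence in degree $0$ by tensoring up with the flat-in-each-degree graded ring $\pi_*(A)$. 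The connecting homomorphism shifts degree by one, but strongness forces each $\pi_i$ to be generated over $\pi_0(A)$ by the image of $\pi_0$; a degree-lowering $\pi_*(A)$-linear map out of such a module into another strong module, compatible with the grading, must be zero. This collapses the long exact sequence into the short exact sequences $0 \to \pi_i(\Fc) \to \pi_i(\Gc) \to \pi_i(\Hc) \to 0$ required by Definition \ref{StrongExact}, and nil-homotopy of the composite $\Fc \to \Hc$ is automatic from the cofibre sequence.

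\medskip

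For the converse, ``exact $\Rightarrow$ cofibre sequence,'' I would form the actual homotopy cofibre $\Cc$ of the map $\Fc \to \Gc$ and compare it with $\Hc$. The nil-homotopy of $\Fc \to \Hc$ supplies a canonical comparison map $\Cc \to \Hc$. Applying the first direction, $\Cc$ is itself a strong sheaf fitting into the short exact sequence $0 \to \pi_i(\Fc) \to \pi_i(\Gc) \to \pi_i(\Cc) \to 0$; comparing with the assumed short exact sequences for $\Hc$ via the five lemma shows the induced map $\pi_i(\Cc) \to \pi_i(\Hc)$ is an isomorphism in every degree. Since the comparison map is an equivalence exactly when it induces isomorphisms on all homotopy sheaves (as noted in the remark following Definition \ref{StrongExact}), we conclude $\Cc \simeq \Hc$, so the original triangle is a cofibre sequence.

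\medskip

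\textbf{The main obstacle} I anticipate is making rigorous the claim that the connecting maps in the long exact sequence vanish. The naive ``generated in degree zero'' argument needs care: strongness is a statement about the graded module structure, but the connecting map is only a priori a map of $\pi_0(A)$-modules, not obviously $\pi_*(A)$-linear, so I must first verify that the long exact sequence is one of graded $\pi_*(A)$-modules (which follows from the cofibre sequence being one of $A$-modules, hence compatible with the $A$-action on homotopy). Once that compatibility is in hand, the degree-shift incompatibility with the grading is the clean reason the connecting maps die, and the rest is a formal five-lemma argument.
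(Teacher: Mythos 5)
Your proposal is correct, and on the direction the paper actually spells out (exact $\Rightarrow$ cofibre sequence) it is essentially the paper's argument: form the homotopy cofibre $\Cc$ of $\Fc \to \Gc$, use the assumed null-homotopy to produce the comparison map $\Cc \to \Hc$, observe that the long exact sequence splits into short exact sequences, and conclude with the five lemma. The genuine difference is that you also prove the converse, which the paper's proof silently omits: your observation that the connecting map $\pi_i(\Hc) \to \pi_{i-1}(\Fc)$ is $\pi_*(A)$-linear of degree $-1$, hence vanishes because $\pi_*(\Hc)$ is generated over $\pi_*(A)$ in degree $0$ while $\pi_{-1}(\Fc)=0$ by connectivity of strong modules, is exactly the right reason and is a worthwhile addition. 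One small repair in your converse direction: you justify the short exact sequences $0 \to \pi_i(\Fc) \to \pi_i(\Gc) \to \pi_i(\Cc) \to 0$ by ``applying the first direction,'' but that direction presupposes all three terms are strong, and $\Cc$ is not yet known to be strong at that point. The correct (and simpler) justification, which is the one the paper uses, is that the assumed injectivity of $\pi_i(\Fc) \to \pi_i(\Gc)$ already forces the connecting maps in the long exact sequence of the cofibre $\Cc$ to vanish; with that substitution your five-lemma argument goes through verbatim, and the equivalence $\Cc \simeq \Hc$ follows since both are connective and the comparison map is an isomorphism on all homotopy sheaves.
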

\begin{proof}
We need to show that the triangle $\Fc \to \Gc \to \Hc$ is equivalent to a legitimate cofibre sequence $\Fc \to \Gc \to \Hc'$ where $\Hc'$ is a homotopy pushout
\begin{center}
\begin{tikzpicture}[scale = 1.5]
\node (A2) at (1,2) {$\Gc$};
\node (B2) at (1,1) {$\Hc'$};
\node (A1) at (0,2) {$\Fc$};
\node (B1) at (0,1) {$0$};
\path[every node/.style={font=\sffamily\small}]
(A1) edge[->] (A2)
(B1) edge[->] (B2)
(A1) edge[->] (B1)
(A2) edge[->] (B2)
;
\end{tikzpicture}
\end{center}
As $\Fc \to \Hc$ factors trough $0$ by assumption, we obtain a morphism 
\begin{center}
\begin{tikzpicture}[scale = 1.5]
\node (A2) at (1,2) {$\Gc$};
\node (B2) at (1,1) {$\Gc$};
\node (C2) at (2,1) {$\Hc$};
\node (A1) at (0,2) {$\Fc$};
\node (B1) at (0,1) {$\Fc$};
\node (C1) at (2,2) {$\Hc'$};
\path[every node/.style={font=\sffamily\small}]
(A1) edge[->] (A2)
(B1) edge[->] (B2)
(C1) edge[->] (C2)
(A1) edge[->] (B1)
(A2) edge[->] (B2)
(A2) edge[->] (C1)
(B2) edge[->] (C2)
;
\end{tikzpicture}
\end{center}
of triangles. As $\Fc \to \Gc$ induces injections on all homotopy groups, the cofibre long exact sequence of homotopy groups splits into multiple short exact sequences. We can now conclude using 5-lemma that $\Hc' \to \Hc$ is an equivalence, proving the claim.
\end{proof}

Finally, a notion of crucial importance for us:

\begin{defn}
A strong sheaf $\Fc$ is said to be \emph{globally generated} if it admits an epimorphism
\begin{equation*}
\Oc_X^{\oplus I} \to \Fc
\end{equation*} 
from a free sheaf (not necessarily of finite rank).
\end{defn}

\begin{rem}
For coherent strong sheaves on quasi-projective derived schemes this is equivalent to requiring an epimorphism
\begin{equation*}
\Oc_X^{\oplus n} \to \Fc
\end{equation*} 
for $n \in \Nb$.
\end{rem}

The space of maps $\Oc_X \to \Fc$ is equivalent to the space of global sections of $\Fc$, and hence our definition of global generation really is an enhancement of the classical notion of having enough global sections. Moreover, global sections of $\Fc$ truncate to give global sections of $\pi_0(\Fc)$, and the truncated sections generating $\pi_0(\Fc)$ is equivalent to the original sections generating $\Fc$. Hence the question of whether or not a strong sheaf $\Fc$ is globally generated is almost completely reduced to the same question concerning $\pi_0(\Fc)$. However, there is a caveat: what exactly is the relationship between the global sections of $\Fc$ and the global sections of $\pi_0(\Fc)$? In essence, this is the topic of the following section.

\section{Ample Line Bundles}

This section has some errors: most importantly, Theorem \ref{TwistingTheorem} is true only for derived schemes whose truncation is projective (rather than quasi-projective). All the useful results can be proved by other means (in particular, replacing the definition of ampleness, Definition \ref{Ampleness}, by a better definition). Please consult Section 2 of \cite{An2}, for the details.

It is a classical fact in algebraic geometry that for a quasi-projective scheme $X$ and a coherent sheaf $\Fc$ on $X$, the twists $\Fc(n)$ are globally generated for $n \gg 0$. The main purpose of this section is to extend this result to strong coherent sheaves on a quasi-projective derived scheme $X$. We note that any quasi-projective derived scheme has a natural candidate for the twisting sheaf $\Oc_X(1)$: choose an embedding $X \hookrightarrow \Pb^n$ and pull back $\Oc_{\Pb^n}(1)$.

\subsection{The Descent Spectral Sequence}

Before going any further, we need to understand the space of global sections. Recall that a quasi-coherent sheaf $\Fc$ on a derived scheme $X$ is a stack of chain complexes for the Zariski topology. The descent condition essentially only requires that when given two open sets $U_1$ and $U_2$, the induced square
\begin{center}
\begin{tikzpicture}[scale = 1.5]
\node (A2) at (2,2) {$\Gamma(U_1; \Fc)$};
\node (B2) at (2,1) {$\Gamma(U_1 \cap U_2; \Fc)$};
\node (A1) at (0,2) {$\Gamma(U_1 \cup U_2; \Fc)$};
\node (B1) at (0,1) {$\Gamma(U_2; \Fc)$};
\path[every node/.style={font=\sffamily\small}]
(A1) edge[->] (A2)
(B1) edge[->] (B2)
(A1) edge[->] (B1)
(A2) edge[->] (B2)
;
\end{tikzpicture}
\end{center}
is homotopy Cartesian. More generally, given a finite open cover $U_1,...,U_n$, the sections over the union should be a homotopy limit of a diagram indexed by all the different intersections of $U_i$.

It seems to be a well known fact that such homotopy limits can be computed as totalization of a double complex given by the diagram, but the original reference seems to be hard to find. A proof is given in \cite{AJS}, and hints of it are given in sections of \cite{Dug} discussing spectral sequences associated to homotopy limits and colimits. In this case the double complex is the level-wise Čech complex 
\begin{center}
\begin{tikzpicture}[scale = 1.5]
\node (A0) at (0,3) {$\vdots$};
\node (A1) at (0,2) {$\prod_{i}\Gamma(U_i; \Fc)_{d}$};
\node (A2) at (0,1) {$\prod_{i}\Gamma(U_i; \Fc)_{d-1}$};
\node (A3) at (0,0) {$\vdots$};

\node (B0) at (3,3) {$\vdots$};
\node (B1) at (3,2) {$\prod_{i < j}\Gamma(U_{ij}; \Fc)_{d}$};
\node (B2) at (3,1) {$\prod_{i < j}\Gamma(U_{ij}; \Fc)_{d-1}$};
\node (B3) at (3,0) {$\vdots$};

\node (C0) at (6,3) {$\vdots$};
\node (C1) at (6,2) {$\prod_{i < j < k}\Gamma(U_{ijk}; \Fc)_{d}$};
\node (C2) at (6,1) {$\prod_{i < j < k}\Gamma(U_{ijk}; \Fc)_{d-1}$};
\node (C3) at (6,0) {$\vdots$};

\node (D1) at (8,2) {$\cdots$};
\node (D2) at (8,1) {$\cdots$};

\path[every node/.style={font=\sffamily\small}]
(A0) edge[->] (A1)
(A1) edge[->] (A2)
(A2) edge[->] (A3)
(B0) edge[->] (B1)
(B1) edge[->] (B2)
(B2) edge[->] (B3)
(C0) edge[->] (C1)
(C1) edge[->] (C2)
(C2) edge[->] (C3)
(A1) edge[->] (B1)
(B1) edge[->] (C1)
(C1) edge[->] (D1)
(A2) edge[->] (B2)
(B2) edge[->] (C2)
(C2) edge[->] (D2)
;
\end{tikzpicture}
\end{center}
where $U_{i_1 i_2 \cdots i_k}$ is shorthand for $\cap_j U_{i_j}$. The vertical differentials are just product of the differentials of chain complexes, and the vertical differentials are the differentials of the Čech complex.

In our case of interest $\Fc$ will be coherent, so especially it will be eventually connective. Moreover the cover will be finite, so the entries will be eventually trivial in the vertical direction. Consider the spectral sequence of double complex where we take the first differential to be the vertical differential. The open sets $U_i$ were assumed to be affine and if we moreover make the extra assumption that all the intersections are affine as well (e.g. $X$ is separated), then in $E_1$ page we will have exactly the sections of the homotopy sheaves $\pi_i \Fc$ on the various intersections of $U_i$. After that, the $E_2$-page will consists of the cohomology groups of the homotopy sheaves. We may summarize this as a proposition.

\begin{prop}
Suppose we have a derived scheme $X$, a cover $(U_i)$ and a coherent sheaf $\Fc$ as above. Then there is a spectral sequence
\begin{equation*}
E_2^{p,q} = H^p(\pi_q \Fc) \implies \pi_{q-p}(\Gamma(X; \Fc))
\end{equation*}
whose differentials have the form $d_r: E_r^{p,q} \to E_r^{p+r,q+r-1}$.
\end{prop}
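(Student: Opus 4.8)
The plan is to construct the desired spectral sequence as the one associated to the double complex described above, with the vertical (chain-complex) differential taken first. Concretely, I would start from the Čech double complex $C^{p,q}$ whose $p$-th column is $\prod_{i_0 < \cdots < i_p} \Gamma(U_{i_0 \cdots i_p}; \Fc)_q$, where the index $q$ records the chain-complex degree of the dg-module of sections. Because $\Fc$ is coherent it is eventually connective, so each column is bounded below in $q$; because the cover is finite, the double complex vanishes for $p$ larger than the number of open sets. Thus the double complex lives in a region that guarantees convergence of the associated spectral sequence, and the totalization computes $\Gamma(X; \Fc)$ by the homotopy-limit-as-totalization result cited from \cite{AJS} and \cite{Dug}.

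The key computational step is to identify the $E_1$ and $E_2$ pages. Taking homology in the vertical direction first means computing, for each fixed $p$, the homology of the column $\prod_{i_0 < \cdots < i_p} \Gamma(U_{i_0 \cdots i_p}; \Fc)_\bullet$. Here I would use the separatedness hypothesis: since $X$ is separated, every finite intersection $U_{i_0 \cdots i_p}$ is again an affine open, so $\Gamma(U_{i_0 \cdots i_p}; \Fc)$ is simply the dg-module corresponding to $\Fc\vert_{U_{i_0 \cdots i_p}}$, and taking vertical homology recovers $\prod_{i_0 < \cdots < i_p} \Gamma(U_{i_0 \cdots i_p}; \pi_q \Fc)$, the Čech $p$-cochains of the homotopy sheaf $\pi_q \Fc$. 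This yields the $E_1$ page $E_1^{p,q} = \check{C}^p(\{U_i\}; \pi_q \Fc)$. The remaining horizontal differential is precisely the Čech differential, so taking its cohomology gives
\begin{equation*}
E_2^{p,q} = H^p(\pi_q \Fc),
\end{equation*}
the sheaf cohomology of $\pi_q \Fc$ computed via the affine cover (which is legitimate since affines are acyclic for coherent sheaves on the truncation). The abutment is $\pi_{q-p} \Gamma(X; \Fc)$, with the indexing arranged so that a class in filtration degree $p$ and internal degree $q$ contributes to homotopy degree $q - p$, and the differentials on the $E_r$ page have the stated shape $d_r : E_r^{p,q} \to E_r^{p+r, q+r-1}$ dictated by the standard double-complex bookkeeping.

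I expect the main obstacle to be rigorously justifying the first move: that the homotopy limit defining $\Gamma(X; \Fc)$ genuinely agrees with the totalization of this explicit double complex, and that the resulting spectral sequence converges to the homotopy groups of that totalization. This is the step that the text flags as folklore with a hard-to-locate reference, so I would lean on \cite{AJS} for the totalization identification and on the convergence criteria for spectral sequences of bounded (or suitably conditionally convergent) double complexes from \cite{Dug}. Once convergence is granted, identifying the pages is routine: it is just the usual argument that the first spectral sequence of a double complex computes vertical homology followed by horizontal cohomology, combined with the affineness of intersections to pass from sections of $\Fc$ to Čech cochains of the homotopy sheaves $\pi_q \Fc$.
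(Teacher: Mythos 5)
Your proposal is correct and follows essentially the same route as the paper: the paper's argument is exactly the identification of $\Gamma(X;\Fc)$ with the totalization of the level-wise \v{C}ech double complex (citing \cite{AJS} and \cite{Dug}), followed by running the spectral sequence with the vertical differential first, using affineness of the intersections and eventual connectivity plus finiteness of the cover to identify the $E_1$ and $E_2$ pages and to guarantee convergence. Nothing essential is missing.
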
 

We end by remarking that running the spectral sequence the other way (starting with the vertical differentials) is sometimes very convenient when doing computations.

\subsection{Twisting Sheaves}

We are now ready to prove the main results in the section. They will be rather straightforward exercises in applying the spectral sequence appearing in the previous section. We begin with an important definition:

\begin{defn}\label{Ampleness}
Let $X$ be a derived scheme and $\Ls$ a line bundle on $X$. We say that $\Ls$ is \emph{ample} if the line bundle $\pi_0$ is ample on the truncation $tX$.
\end{defn}

Any quasi-projective derived scheme $X$ has an ample line bundle: indeed, choose a locally closed embedding $X \to \Pb^n$ and take the pullback $\Oc_X(1)$ of the line bundle $\Oc_{\Pb^n}(1)$ on $\Pb^n$. Now the line bundle $\pi_0(\Oc_X(1))$ coincides with the pullback of $\Oc_{\Pb^n}(1)$ along $tX \to \Pb^n$, and is therefore ample. In the sequel $\Oc(1)$ is any ample line bundle on $X$, and for a quasi-coherent sheaf $\Fc$, $\Fc(n)$ is its $n^{th}$ twist $\Fc \otimes^L \Oc_X(n)$. We note that $\pi_i(\Fc(n)) = \pi_i(\Fc)(n)$ as line bundles are flat (cf. Proof of Lemma 2.2.2.2. (2) in \cite{TV2} or \cite{Lur2} Proposition 7.2.2.13.). 

\begin{thm}\label{TwistingTheorem}
Let $\Fc$ be a coherent sheaf on a quasi-projective derived scheme $X$. For any $i \in \Zb$, and all $n \gg 0$ (depending on $i$), we have a natural isomorphism
\begin{equation*}
\pi_i(\Gamma(X; \Fc(n))) \cong \Gamma(tX, \pi_i(\Fc)(n)).
\end{equation*} 
\textbf{THIS IS TRUE ONLY WHEN $tX$ IS PROJECTIVE}. Please consult \cite{An2} for quasi-projectivity \& related stuff...
\end{thm}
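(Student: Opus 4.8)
The plan is to feed the twist $\Fc(n)$ into the descent spectral sequence of the previous subsection and to use classical Serre vanishing on the truncation to collapse everything except a single edge term. First I would record the shape of the spectral sequence for $\Fc(n)$. Since line bundles are flat we have $\pi_q(\Fc(n)) \cong \pi_q(\Fc)(n)$, and each homotopy sheaf $\pi_q(\Fc)$ is an ordinary coherent sheaf on the classical scheme $tX$; hence the $E_2$-page reads
$$E_2^{p,q} = H^p\bigl(tX, \pi_q(\Fc)(n)\bigr) \implies \pi_{q-p}\bigl(\Gamma(X; \Fc(n))\bigr),$$
with differentials $d_r : E_r^{p,q} \to E_r^{p+r, q+r-1}$. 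The target $\pi_i(\Gamma(X; \Fc(n)))$ receives contributions only from the terms $E_\infty^{p, i+p}$ with $p \geq 0$ (there are no negative columns), and the $p = 0$ term is exactly $E_2^{0,i} = H^0(tX, \pi_i(\Fc)(n)) = \Gamma(tX, \pi_i(\Fc)(n))$, the group we are after.

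Next I would isolate which higher $E_2$-terms must vanish. By Definition \fref{Ampleness} the line bundle $\pi_0\Oc_X(1)$ is ample on the quasi-projective scheme $tX$, so classical Serre vanishing is available for every coherent sheaf on $tX$. The terms that can obstruct the identification of $E_2^{0,i}$ with $\pi_i(\Gamma(X;\Fc(n)))$ are the neighbouring contributions $E_2^{p,i+p} = H^p(tX, \pi_{i+p}(\Fc)(n))$ for $p \geq 1$, together with the targets $E_r^{r,i+r-1}$ of the differentials leaving $E_r^{0,i}$ (no differential can enter $E_r^{0,i}$, since its source column would be negative). All of these are subquotients of groups $H^p(tX, \pi_q(\Fc)(n))$ with $p \geq 1$ and $i+1 \leq q \leq i + \dim tX$; as $tX$ has finite cohomological dimension this is a finite list of coherent sheaves, and Serre vanishing provides a single threshold $n_0$, depending on $i$, beyond which every one of these cohomology groups vanishes.

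Finally I would assemble the conclusion. For $n \geq n_0$ every term $E_2^{p, i+p}$ with $p \geq 1$ vanishes, removing all contributions to $\pi_i(\Gamma(X;\Fc(n)))$ other than the edge; and every outgoing differential from $E_r^{0,i}$ lands in a vanishing group while nothing can enter from the left, so $E_2^{0,i} = E_\infty^{0,i}$. Because $tX$ has finite cohomological dimension, the fixed total degree $q-p = i$ receives contributions from only finitely many terms $E_\infty^{p,i+p}$ with $0 \leq p \leq \dim tX$, so the spectral sequence converges strongly in this degree and the edge map yields the desired isomorphism $\pi_i(\Gamma(X;\Fc(n))) \cong \Gamma(tX, \pi_i(\Fc)(n))$; its naturality in $\Fc$ is inherited from the naturality of the edge morphism.

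The only genuine content beyond bookkeeping is the vanishing input, and this is where I expect the care to lie: one must be sure that Serre vanishing is honestly applicable, which it is through Definition \fref{Ampleness} supplying an actual ample line bundle on $tX$ rather than merely a locally closed embedding, and one must check that a single threshold $n_0$ can be chosen uniformly over the finite band of homotopy degrees $i+1 \leq q \leq i + \dim tX$ that feed into $\pi_i$. Everything else is the standard manipulation of a bounded-in-each-degree spectral sequence.
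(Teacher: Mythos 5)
Your proof follows exactly the paper's argument: feed $\Fc(n)$ into the descent spectral sequence, identify $\Gamma(tX,\pi_i(\Fc)(n))$ with the edge term $E_2^{0,i}$, and use Serre vanishing on the truncation together with the finite cohomological dimension of $tX$ to show that the finitely many cells which could either contribute to total degree $i$ or receive a differential from $E_r^{0,i}$ all vanish for $n\gg 0$. You are in fact more explicit than the paper about which cells are problematic (the terms $E_2^{p,i+p}$ for $p\geq 1$ and the targets $E_r^{r,i+r-1}$) and why a single threshold $n_0$ works, so nothing is missing relative to the paper's own proof.
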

\begin{proof}
We use the descent spectral sequence from the previous section. The $E_2$ page is given by $E^{p,q}_2 = H^p(\pi_q \Fc)$. To prove the claim, we need to know that the $(i,0)$-cell does not support a nontrivial differential, and that there is nothing left on the $(i+j,j)$-diagonal on the $E_\infty$-page. But this is easy. We have only finitely many cells that could potentially cause any problems, and they all consist of higher cohomologies of coherent sheaves. Hence, by twisting sufficiently many times, the entries in the problematic cells vanish, giving us the desired isomorphism.
\end{proof}

As a direct corollary, we obtain a result concerning global generation of strong sheaves.

\begin{cor}\label{GlobalGeneration}
Let $\Fc$ be a strong and coherent sheaf (e.g. a vector bundle) on a quasi-projective derived scheme $X$. For $n \gg 0$, the sheaf $\Fc(n)$ is globally generated.
\end{cor}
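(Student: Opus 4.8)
The plan is to reduce global generation of the strong sheaf $\Fc(n)$ to the corresponding classical statement on the truncation $tX$, using the $i = 0$ case of Theorem \ref{TwistingTheorem} to lift global sections. First I would recall the criterion from Definition \ref{StrongExact}: a map of strong sheaves is an epimorphism precisely when the induced map on $\pi_0$ is surjective. Hence, to exhibit $\Fc(n)$ as globally generated, it suffices to produce finitely many global sections $s_1, \dots, s_m \in \pi_0(\Gamma(X; \Fc(n)))$ --- equivalently maps $\Oc_X \to \Fc(n)$ --- whose assembled map $\Oc_X^{\oplus m} \to \Fc(n)$ is surjective on $\pi_0$.

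Next I would pass to the truncation. Since $X$ is quasi-projective, $tX$ is a classical quasi-projective scheme and $\pi_0(\Oc_X(1))$ is ample on it by Definition \ref{Ampleness}. The homotopy sheaf $\pi_0(\Fc)$ is coherent on $tX$, so by the classical theorem on twisting a coherent sheaf by an ample line bundle, the twist $\pi_0(\Fc)(n) = \pi_0(\Fc) \otimes \pi_0(\Oc_X(n))$ is globally generated for $n \gg 0$. I would fix such an $n$ together with finitely many global sections $t_1, \dots, t_m \in \Gamma(tX, \pi_0(\Fc)(n))$ that generate it.

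Now I would invoke Theorem \ref{TwistingTheorem} with $i = 0$: after possibly enlarging $n$, the natural isomorphism $\pi_0(\Gamma(X; \Fc(n))) \cong \Gamma(tX, \pi_0(\Fc)(n))$ lets me lift each $t_j$ to a section $s_j$ of $\Fc(n)$. Here I note that $\Fc(n)$ is again strong and coherent, since twisting by the line bundle $\Oc_X(n)$ is a local isomorphism and $\pi_i(\Fc(n)) = \pi_i(\Fc)(n)$, so the target is a legitimate strong sheaf to which the epimorphism criterion applies. The assembled map $\Oc_X^{\oplus m} \to \Fc(n)$ induces on $\pi_0$ exactly the classical map $\Oc_{tX}^{\oplus m} \to \pi_0(\Fc)(n)$ determined by the $t_j$, because the identification of Theorem \ref{TwistingTheorem} is the truncation-of-sections map. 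Since the $t_j$ generate, this $\pi_0$-map is surjective, and the criterion from Definition \ref{StrongExact} then forces $\Oc_X^{\oplus m} \to \Fc(n)$ to be an epimorphism of strong sheaves, which is precisely global generation.

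The main obstacle I anticipate is the bookkeeping of the two independent ``$n \gg 0$'' conditions --- one coming from classical global generation on $tX$, one from the $i = 0$ case of Theorem \ref{TwistingTheorem} --- together with verifying that the lifted sections genuinely induce the expected map on $\pi_0$. This latter point hinges on the compatibility of the twisting isomorphism with the truncation map on global sections (the truncation map already discussed for strong sheaves); once that identification is pinned down, the rest of the argument is a formal consequence of the strong-sheaf epimorphism criterion and requires no further derived input.
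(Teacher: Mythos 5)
Your proposal is correct and follows essentially the same route as the paper: apply the classical Serre twisting theorem to the coherent sheaf $\pi_0(\Fc)$ on the truncation $tX$, then use the $i=0$ case of Theorem \ref{TwistingTheorem} to lift the generating sections, concluding via the criterion that a map of strong sheaves is an epimorphism if and only if it is surjective on $\pi_0$. The paper's own proof is just a one-line compression of exactly this argument, so your added bookkeeping of the two $n \gg 0$ thresholds and the compatibility of the lifted sections with truncation is a faithful (and slightly more careful) elaboration rather than a different approach.
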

\begin{proof}
For $n \gg 0$ we have that $\pi_0(\Fc)(n)$ is globally generated and $\pi_0(\Gamma(X; \Fc(n))) \cong \Gamma(X; \pi_0(\Fc)(n))$, proving the claim.
\end{proof}
The above theorem plays an important part in \cite{An} when constructing Chern classes for algebraic cobordism $\Omega^*$.

\begin{rem}
For strong sheaves $\Fc$, the space of global sections coincides with the space of maps $\Oc_X \to \Fc$ by definition. However, something strange seems to be going on as the descent spectral sequence may give us non-trivial negative homotopy groups. The reason for this is that we are actually computing the global sections of a stack taking values in unbounded chain complexes (spectra) instead of connective chain complexes (spaces). The space of global sections can be recovered from the spectrum of global sections by truncating it at zero. Especially, the non-negative homotopy groups given by the spectral sequence really compute the homotopy groups of the space of global sections.
\end{rem}

For coherent sheaves with only finitely many nontrivial homotopy sheaves, a slightly stronger version of \fref{TwistingTheorem} is true.

\begin{cor}
Let $\Fc$ be a coherent on a quasi-projective derived scheme $X$, and assume $\Fc$ has only finitely many nontrivial homotopy groups. Now for all $n \gg 0$, we have for all $i$ a natural isomorphism
\begin{equation*}
\pi_i(\Gamma(X; \Fc(n))) \cong \Gamma(tX, \pi_i(\Fc)(n)).
\end{equation*} 
\end{cor}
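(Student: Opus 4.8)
The plan is to run the descent spectral sequence from the previous subsection exactly as in the proof of \fref{TwistingTheorem}, but to exploit the hypothesis that $\Fc$ has only finitely many nontrivial homotopy groups in order to produce a single twist $n$ that works for every index $i$ simultaneously. Recall that after twisting the $E_2$-page reads $E_2^{p,q} = H^p(\pi_q(\Fc)(n))$, since $\pi_q(\Fc(n)) \cong \pi_q(\Fc)(n)$, and that it converges to $\pi_{q-p}(\Gamma(X; \Fc(n)))$ with differentials $d_r \colon E_r^{p,q} \to E_r^{p+r,q+r-1}$.

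First I would record the two finiteness constraints that bound the support of the $E_2$-page. By hypothesis there is a finite set $S \subset \Zb$ with $\pi_q(\Fc) = 0$ for $q \notin S$, so only finitely many rows are nonzero; and since $X$ is Noetherian and finite-dimensional, $H^p$ of any quasi-coherent sheaf on $tX$ vanishes for $p < 0$ and for $p$ exceeding $\dim tX$. Thus the entire $E_2$-page is concentrated in a bounded rectangle.

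Next I would invoke classical Serre vanishing on the truncation. Because $\pi_0(\Oc_X(1))$ is ample on the scheme $tX$ and each $\pi_q(\Fc)$ is coherent over $\pi_0(\Oc_X)$, for every $q \in S$ there is an integer $n_q$ such that $H^p(\pi_q(\Fc)(n)) = 0$ for all $p > 0$ and all $n \ge n_q$. The crucial point — and where the finiteness hypothesis does its work — is that $S$ is finite, so setting $n \ge \max_{q \in S} n_q$ kills the higher cohomology in every nonzero row at once. In the proof of \fref{TwistingTheorem} only the finitely many cells threatening a single fixed diagonal had to be cleared; here the uniform bound lets us clear the whole page.

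Finally I would observe that with this choice the $E_2$-page is concentrated in the single column $p = 0$, where $E_2^{0,q} = H^0(\pi_q(\Fc)(n)) = \Gamma(tX, \pi_q(\Fc)(n))$. Since every differential changes $p$ by $r \ge 2$, no differential can enter or leave the $p = 0$ column, so the sequence degenerates at $E_2$; and since each total degree $i = q - p$ is hit by the unique surviving term with $p = 0$, $q = i$, there are no extension problems. Reading off the abutment then yields the desired natural isomorphism $\pi_i(\Gamma(X; \Fc(n))) \cong \Gamma(tX, \pi_i(\Fc)(n))$ for every $i$. The only real subtlety to verify carefully is this uniformity step — that finitely many nonzero homotopy sheaves genuinely permit one common $n$; the degeneration and the absence of extensions are then formal consequences of the page being a single column.
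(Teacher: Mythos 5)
Your proposal is correct and follows exactly the route the paper intends: the paper's own proof is a one-line reference back to Theorem \fref{TwistingTheorem}, and your write-up simply makes explicit the point that finitely many nonzero homotopy sheaves plus Serre vanishing on $tX$ yield a single twist $n$ clearing the whole $E_2$-page rather than one fixed diagonal. No gaps; the degeneration and absence of extension problems are handled as the paper expects.
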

\begin{proof}
The proof is essentially the same as the proof of \fref{TwistingTheorem}.
\end{proof}

Moreover, under certain hypotheses, being quasi-projective is equivalent to admitting an ample line bundle.

\begin{thm}\label{AmpleMeansQProj}
Let $X$ be a derived $k$-scheme whose truncation is of finite type, and suppose $k$ is of characteristic 0. If $X$ admits an ample line bundle, then it is quasi-projective.
\end{thm}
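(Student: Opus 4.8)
The plan is to reduce everything to the classical theorem that a scheme of finite type over a field with an ample line bundle is quasi-projective, using the principle --- recorded in the remark following the definition of closed embedding --- that a morphism of derived schemes is a closed embedding precisely when its truncation is. Write $\Ls$ for the ample line bundle, so that $\pi_0 \Ls$ is ample on $tX$ by \fref{Ampleness}. Since $tX$ is of finite type over $k$, the classical theory (EGA II) provides an integer $n$ and finitely many global sections $\bar s_0, \dots, \bar s_N \in \Gamma(tX, (\pi_0\Ls)^{\otimes n})$ that generate $(\pi_0\Ls)^{\otimes n}$ and define a locally closed immersion $\iota\colon tX \hookrightarrow \Proj^N$ with $\iota^*\Oc(1) \cong (\pi_0\Ls)^{\otimes n}$.

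First I would lift these data to $X$. Because $tX$ is quasi-projective it is separated, and the compatibility of truncation with fibre products ($t(X\times_k X) = tX \times_k tX$) together with the closed-embedding criterion shows that the diagonal of $X$ is a closed embedding; as $X_\mathrm{top} = (tX)_\mathrm{top}$ is quasi-compact, $X$ is separated and quasi-compact. Hence the descent spectral sequence of Section 4.1 is available for the (coherent) structure sheaf $\Oc_X$, and the argument of \fref{TwistingTheorem} applies --- its proof uses only quasi-compactness, separatedness and ampleness --- to give, after enlarging $n$, an isomorphism $\pi_0\Gamma(X; \Ls^{\otimes n}) \cong \Gamma(tX, (\pi_0\Ls)^{\otimes n})$. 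I would use this to lift each $\bar s_i$ to a section $s_i \in \pi_0\Gamma(X; \Ls^{\otimes n})$, i.e. to a homotopy class of maps $\Oc_X \to \Ls^{\otimes n}$. Since epimorphisms of strong sheaves are detected on $\pi_0$ (\fref{StrongExact}) and the $\bar s_i$ generate $(\pi_0\Ls)^{\otimes n} = \pi_0(\Ls^{\otimes n})$, the sections $s_0, \dots, s_N$ generate $\Ls^{\otimes n}$.

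Next I would invoke the representability of projective space by tuples $(\Li, s_0, \dots, s_N)$ of a line bundle together with generating sections (fact 2 of the introduction, from \cite{An}, \cite{Lur3}) to obtain a morphism $f\colon X \to \Proj^N$ of derived schemes with $f^*\Oc(1) \simeq \Ls^{\otimes n}$ and $f^*$ of the tautological sections equal to the $s_i$. The crucial point is to identify its truncation: pulling the classifying data back along $tX \hookrightarrow X$ sends $\Ls^{\otimes n}$ to $(\pi_0\Ls)^{\otimes n}$, since the pullback of a vector bundle to the truncation is its $\pi_0$ (\fref{VectorBundles}), and sends each $s_i$ to $\bar s_i$; thus $tf$ is classified by $((\pi_0\Ls)^{\otimes n}, \bar s_i)$ --- that is, $tf = \iota$.

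It then remains to read off quasi-projectivity. Factor the locally closed immersion $\iota$ as a closed immersion $tX \hookrightarrow U$ into an open subscheme $U \subseteq \Proj^N$, which is possible since $tX$ is Noetherian. Because $X$ and $tX$ share the same underlying space and $f$, $tf = \iota$ have the same underlying continuous map, $f$ lands set-theoretically in $U$ and therefore factors through the open derived subscheme $U$ as some $g\colon X \to U$. The truncation $tg$ is the closed immersion $tX \hookrightarrow U$, so by the closed-embedding criterion $g$ is a closed embedding; as $U$ is open in $\Proj^N$, this exhibits $X$ as quasi-projective. I expect the main obstacle to be the middle step --- producing the derived morphism $f$ and verifying that its truncation is exactly the classical immersion $\iota$ --- since this is where the representability statement and the section-lifting must be matched up carefully, whereas everything else is either classical input or a formal application of the principle that closed embeddings are detected on truncations.
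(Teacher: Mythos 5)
Your proposal is correct and follows essentially the same route as the paper: lift generating sections of $(\pi_0\Ls)^{\otimes n}$ along the isomorphism $\pi_0\Gamma(X;\Ls^{\otimes n})\cong\Gamma(tX;(\pi_0\Ls)^{\otimes n})$ supplied by the descent spectral sequence, use the representability of $\Pb^N$ by a line bundle with generating sections to produce $f\colon X\to\Pb^N$, and conclude that $f$ is a locally closed embedding because its truncation is and closed embeddings are detected on truncations. You are in fact somewhat more careful than the paper at two points --- verifying separatedness of $X$ before invoking the spectral sequence, and noting that the twisting argument does not circularly presuppose quasi-projectivity of $X$ --- but the underlying argument is identical.
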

\begin{proof}
Let $\Ls$ be the ample line bundle. By the earlier results of this section, for $n \gg 0$ the tensor power $\Ls^{\otimes n}$ is globally generated, $\pi_0(\Gamma(X; \Ls^{\otimes n})) \cong \Gamma(tX; \pi_0(\Ls)^{\otimes n})$ and $\pi_0(\Ls)^{\otimes n}$ is very ample. Pick any sections $s_0, ..., s_n$ of $\pi_0(\Ls)^{\otimes n}$ determining a locally closed embedding $tX \hookrightarrow \Pb^n$, and lift them to sections $\tilde{s}_0,...,\tilde{s}_n$ of $\Ls^{\otimes n}$. In characteristic $0$ it is known (see \cite{Lur3} Section 19.2.6) that this is equivalent to a data of a map $X \to \Pb^n$, and the truncation of this map is the original locally closed embedding. It follows that $X \to \Pb^n$ is a locally closed embedding and $X$ is quasi-projective.
\end{proof}

\subsection{Relative Ampleness}

It is often useful to have at hand a relative notion of ampleness, and to understand how it is related to a morphism being quasi-projective. The purpose of this subsection is to record straightforward generalizations of some classical facts to derived algebraic geometry. We begin with a definition:

\begin{defn}
Let $f: X \to Y$ be a morphism of derived $k$-schemes. A line bundle $\Li$ on $X$ is \emph{relatively ample over $Y$} (or \emph{$f$-ample}) if for every affine open $\Spec (A) \subset Y$, the line bundle $\Li \vert_{f^{-1} \Spec (A)}$ is ample in the sense of definition \fref{Ampleness}.
\end{defn}

Comparing the above definition to the classical case, we obtain:

\begin{prop}
Let $f: X \to Y$ be a morphism of derived $k$-schemes, and $\Li$ a line bundle on $X$. The following conditions are equivalent:
\begin{enumerate}
\item $\Li$ is $f$-ample;
\item the truncation $\pi_0(\Li)$ on $tX$ is ample over $tY$;
\item there exists an affine open cover $(\Spec(A_i))_{i \in I}$ of $Y$ so that the line bundles $\Li \vert_{f^{-1} \Spec(A_i)}$ are ample for all $i \in I$.
\end{enumerate}
\end{prop}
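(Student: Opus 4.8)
The plan is to reduce all three conditions to a single statement about the line bundle $\pi_0(\Li)$ on the classical scheme $tX$ relative to $tY$, and then to invoke the corresponding classical result. The first observation is that the underlying topological space of a derived scheme coincides with that of its truncation, so affine opens of $Y$ are in natural bijection with affine opens of $tY$: an affine open $\Spec(A) \subset Y$ corresponds to $\Spec(\pi_0 A) \subset tY$. Moreover, truncation is compatible with preimages, in the sense that $t(f^{-1}\Spec(A)) = (tf)^{-1}\Spec(\pi_0 A)$, where $tf\colon tX \to tY$ denotes the induced morphism of truncations.

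The key reduction is as follows. Since line bundles pull back to line bundles and the formation of $\pi_0$ is compatible with restriction and with pullback along the truncation embedding (Example \fref{VectorBundles}), we obtain a natural identification $\pi_0(\Li|_{f^{-1}\Spec(A)}) \cong \pi_0(\Li)|_{(tf)^{-1}\Spec(\pi_0 A)}$. By Definition \fref{Ampleness}, the restriction $\Li|_{f^{-1}\Spec(A)}$ is ample precisely when this truncated line bundle is ample on $(tf)^{-1}\Spec(\pi_0 A)$. Feeding this into the three conditions, condition (1) becomes the assertion that $\pi_0(\Li)$ is ample on $(tf)^{-1}\Spec(B)$ for every affine open $\Spec(B) \subset tY$; condition (3) becomes the same assertion restricted to the members of a fixed affine cover of $tY$; and condition (2) is, by the classical definition, exactly the relative ampleness of $\pi_0(\Li)$ over $tf$. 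The implication (1) $\Rightarrow$ (3) is then immediate, and (1) $\Leftrightarrow$ (2) is the standard fact that a relatively ample line bundle restricts to an ample line bundle over each affine open of the base. The substantive content is the implication (3) $\Rightarrow$ (2): that relative ampleness can be tested on a single affine open cover of the base. This is a classical theorem (the relative version of the ampleness criteria in EGA II, or in Hartshorne), which I would simply cite.

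The only point requiring genuine care is the compatibility used in the first two steps, namely that truncation commutes with the homotopy pullback defining $f^{-1}\Spec(A)$ and that $\pi_0$ of the restricted line bundle agrees with the restriction of $\pi_0(\Li)$. Both rest on the flatness of line bundles recorded after Definition \fref{Ampleness}, together with the observation from Example \fref{VectorBundles} that vector bundles pull back to vector bundles and restrict compatibly with $\pi_0$ along the truncation embedding $tX \hookrightarrow X$. Once these identifications are established, the derived statement carries no information beyond its classical counterpart, and the result follows.
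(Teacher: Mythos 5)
Your proposal is correct and follows essentially the same route as the paper: both arguments reduce all three conditions to statements about $\pi_0(\Li)$ on the truncation via Definition \fref{Ampleness} and then invoke the classical equivalences for relative ampleness. Your write-up is more explicit about the compatibilities (truncation commuting with preimages, $\pi_0$ commuting with restriction) that the paper leaves implicit, but the substance is the same.
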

\begin{proof}
The equivalence of the conditions 1 and 2 is a direct consequence of the definition \fref{Ampleness}. On the other hand, as the equivalence of the conditions 1 and 3 is known for classical schemes, we obtain the analogous equivalence for derived schemes by checking on the truncation.
\end{proof}

Moreover, as relative ampleness is stable under classical pullbacks, we immediately obtain the following result.

\begin{prop}[Ampleness is stable under homotopy pullbacks]
Let $f: X \to Y$ and $g: Y' \to Y$ be morphisms of derived $k$-schemes, and let $\Li$ be an $f$-ample line bundle on $X$. Consider the homotopy Cartesian square
$$
\CD
X' @>{f'}>> Y'   \\
@V{g'}VV @V{g}VV \\
X @>{f}>> Y
\endCD
$$
Now $g'^* \Li$ is $f'$-ample.
\end{prop}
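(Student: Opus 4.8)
The plan is to reduce the derived statement to the corresponding classical fact about relative ampleness by passing to truncations, using the characterization of $f$-ampleness recorded in the preceding proposition (in particular the equivalence of conditions 1 and 2 there). By that equivalence, $\Li$ is $f$-ample if and only if $\pi_0(\Li)$ is ample over $tY$ relative to the truncated morphism $tf \colon tX \to tY$, and likewise $g'^*\Li$ is $f'$-ample if and only if $\pi_0(g'^*\Li)$ is ample over $tY'$ relative to $tf' \colon tX' \to tY'$. Hence it suffices to establish this latter classical amplitude statement.

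The key geometric input is that truncation carries the homotopy Cartesian square to an \emph{honest} Cartesian square of schemes. On affine pieces the homotopy pullback is computed by a derived tensor product $B \otimes^L_A C$, and since $\pi_0(B \otimes^L_A C) \cong \pi_0(B) \otimes_{\pi_0(A)} \pi_0(C)$, the truncation of an affine homotopy pullback is the ordinary fibre product of the truncations. Gluing over an affine open cover of $Y$ yields a natural identification $t(X \times^h_Y Y') \simeq tX \times_{tY} tY'$, so the truncated square
$$
\CD
tX' @>{tf'}>> tY'   \\
@V{tg'}VV @V{tg}VV \\
tX @>{tf}>> tY
\endCD
$$
is Cartesian in the classical sense.

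It remains to identify the line bundle. Since line bundles pull back to line bundles and restrict along the truncation embedding to their own $\pi_0$ (as recorded in the example on vector bundles), naturality of truncation with respect to $g'$ gives $\pi_0(g'^*\Li) \cong (tg')^*\pi_0(\Li)$. Now $\pi_0(\Li)$ is ample over $tY$, and relative ampleness of a line bundle on classical schemes is stable under arbitrary base change; applying this to the Cartesian square above shows that $(tg')^*\pi_0(\Li) \cong \pi_0(g'^*\Li)$ is ample over $tY'$. By the preceding proposition this is precisely the assertion that $g'^*\Li$ is $f'$-ample.

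The only step where derived geometry could a priori cause trouble is the identification $t(X \times^h_Y Y') \simeq tX \times_{tY} tY'$, since the truncation of a homotopy limit need not in general agree with the classical limit of the truncations. What rescues us here is that for a pullback this reduces to the fact that $\pi_0$ commutes with the derived tensor product, which holds on the nose; this is the main point to get right, after which the conclusion is a formal combination of the classical stability of relative ampleness under base change with the equivalence of conditions 1 and 2 in the previous proposition.
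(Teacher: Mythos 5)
Your proposal is correct and follows exactly the route the paper intends: the paper gives no written proof beyond the remark that relative ampleness is stable under classical pullbacks, and your argument simply fills in the details of that reduction (truncation turns the homotopy Cartesian square into an honest Cartesian square because $\pi_0$ commutes with derived tensor products, $\pi_0(g'^*\Li)\cong (tg')^*\pi_0(\Li)$, then apply the classical base-change stability together with the equivalence of conditions 1 and 2 of the preceding proposition). No discrepancies to report.
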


The main result of this subsection is the following:

\begin{thm}\label{RelAmpleMeansQProj}
Let $f: X \to Y$ be a morphism of derived $k$-schemes with $Y$ quasi-projective. Now the morphism $f$ is quasi-projectice if and only if $X$ admits an $f$-ample line bundle. 
\end{thm}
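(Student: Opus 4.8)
The plan is to prove the two implications separately, pushing all the genuinely derived content into the absolute statement Theorem \fref{AmpleMeansQProj} and verifying everything else on truncations. For the easy direction, suppose $f$ is quasi-projective, so it factors through a closed embedding $i: X \hookrightarrow U \times Y$ with $U \subset \Proj^n$ open. I would set $\Li := i^* p^* \Oc_{\Proj^n}(1)$, where $p: U \times Y \to U \hookrightarrow \Proj^n$ is the projection. By the preceding proposition it suffices to check that $\pi_0(\Li)$ is ample relative to $tf$. Since $\pi_0$ commutes with the relevant pullbacks (line bundles being flat), $\pi_0(\Li)$ is the restriction along the classical closed embedding $tX \hookrightarrow U \times tY$ of the pullback of $\Oc_{\Proj^n}(1)$; this is $tf$-ample by the classical theory, being relatively very ample on $\Proj^n \times tY$, staying ample on the open $U \times tY$, and restricting to an ample bundle along the closed embedding. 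Hence $\Li$ is $f$-ample.

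For the converse, suppose $X$ admits an $f$-ample line bundle $\Li$. Since $Y$ is quasi-projective it carries an ample line bundle $\Mc$ (a pullback of some $\Oc(1)$). I would first produce an \emph{absolute} ample bundle on $X$: the bundle $\Li \otimes f^* \Mc^{\otimes n}$ has $\pi_0$ equal to $\pi_0(\Li) \otimes (tf)^* \pi_0(\Mc)^{\otimes n}$, and by the classical result that a relatively ample bundle twisted by a large power of the pullback of an ample bundle becomes ample, this is ample on $tX$ for $n \gg 0$. Thus $\Li \otimes f^* \Mc^{\otimes n}$ is ample in the sense of Definition \fref{Ampleness}, and Theorem \fref{AmpleMeansQProj} shows $X$ is quasi-projective, say via a closed embedding $j: X \hookrightarrow V$ with $V \subset \Proj^m$ open.

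It then remains to upgrade absolute quasi-projectivity of $X$ to quasi-projectivity of $f$, which I would do by a graph argument. As $Y$ is quasi-projective it is separated, so the diagonal $\Delta_Y: Y \to Y \times_k Y$ is a closed embedding; pulling back along $f \times \mathrm{id}: X \times_k Y \to Y \times_k Y$ and using that closed embeddings are stable under homotopy pullback (on truncations this is just the stability of surjections of $\pi_0$ under base change, via $\pi_0(B \otimes_A^L C) = \pi_0(B) \otimes_{\pi_0(A)} \pi_0(C)$), the graph $\Gamma_f: X \to X \times_k Y$ is a closed embedding. Composing it with the closed embedding $j \times \mathrm{id}: X \times_k Y \hookrightarrow V \times_k Y$ exhibits $X \hookrightarrow V \times Y$ as a closed embedding with $V \subset \Proj^m$ open, so $f$ is quasi-projective.

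The main obstacle I expect is bookkeeping of finiteness rather than any new geometric idea: to invoke both the classical twisting result and Theorem \fref{AmpleMeansQProj} one needs $tX$ (equivalently $tf$) to be of finite type over $k$, which I would extract from the standing Noetherian and finite-type hypotheses on the schemes in play. The genuinely derived difficulty---lifting the truncated locally closed embedding to the derived level in characteristic $0$---is already resolved inside Theorem \fref{AmpleMeansQProj}, so once finiteness is in place the remaining steps reduce to classical statements together with the stability of closed embeddings under homotopy pullback.
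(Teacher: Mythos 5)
Your proposal follows essentially the same route as the paper: the easy direction is checked on the truncation, and for the converse one twists $\Li$ by (a power of) the pullback of an ample bundle on $Y$ to obtain an absolutely ample bundle, invokes Theorem \fref{AmpleMeansQProj}, and then upgrades the absolute locally closed embedding to the relative factorization $X \hookrightarrow U \times Y \to Y$. Your explicit graph argument for that last step, and your use of $\Mc^{\otimes n}$ with $n \gg 0$ where the paper writes a single twist, merely make precise details the paper leaves implicit.
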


Of course, one of the directions is elementary: if $f$ factors through a closed embedding $i: X \hookrightarrow U \times Y$, then the line bundle $i^* \OO_U(1)$ is $f$-ample (this can be checked on the truncation). The other direction is taken care of by the following lemma:

\begin{lem}
Let $Y$ be quasi-projective, $f: X \to Y$ a morphism of derived $k$-schemes, and let $\Li$ on $X$ be ample over $Y$. Now $f$ factors as
$$X \stackrel i \hookrightarrow U \times Y \stackrel \pi \to Y,$$
where $i$ is a closed embedding, $\pi$ is the natural projection and $U$ is an open subscheme of a projective scheme $\Proj^n$.
\end{lem}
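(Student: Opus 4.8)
The plan is to reduce the relative statement to the absolute one already settled in Theorem \fref{AmpleMeansQProj}. Since $Y$ is quasi-projective it carries an ample line bundle $\Oc_Y(1)$, and I would manufacture an honest (absolutely) ample line bundle on $X$ by twisting $\Li$ with a pullback: set $\Mc := \Li \otimes^L f^*\Oc_Y(m)$. To see that $\Mc$ is ample in the sense of \fref{Ampleness} I only need to check that $\pi_0(\Mc)$ is ample on $tX$. As line bundles are flat, $\pi_0(\Mc) \cong \pi_0(\Li) \otimes tf^*\big(\pi_0\Oc_Y(1)\big)^{\otimes m}$; here $\pi_0(\Li)$ is $tf$-ample and $\pi_0\Oc_Y(1)$ is ample on the quasi-compact scheme $tY$, so the classical fact that a relatively ample bundle twisted by a high power of the pullback of an ample bundle on the base is ample shows that $\pi_0(\Mc)$ is ample for $m \gg 0$.

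With $\Mc$ known to be ample, Theorem \fref{AmpleMeansQProj} applies: we work over a characteristic $0$ field and $tX$ is of finite type over $k$ (which we may take as part of the standing finiteness hypotheses, e.g.\ because $f$ is of finite type). It yields that $X$ is quasi-projective, and unwinding the definition produces a closed embedding $j : X \hookrightarrow W$ with $W$ an open subscheme of some $\Proj^N$.

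It then remains to make $j$ relative over $Y$. I would consider the morphism $(j, f) : X \to W \times Y$ and factor it as
\begin{equation*}
X \xrightarrow{\;\Gamma_f\;} X \times Y \xrightarrow{\;j \times \mathrm{id}_Y\;} W \times Y,
\end{equation*}
where $\Gamma_f := (\mathrm{id}_X, f)$ is the graph of $f$. The right-hand map is the base change of the closed embedding $j$ along $W \times Y \to W$, hence a closed embedding. The graph $\Gamma_f$ is the homotopy pullback of the diagonal $\Delta_Y : Y \to Y \times Y$ along $f \times \mathrm{id}_Y$; as $Y$ is quasi-projective and therefore separated, $\Delta_Y$ is a closed embedding, and closed embeddings are stable under homotopy pullback (being detected on truncations, where $\pi_0$ commutes with the relevant tensor products). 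Thus $\Gamma_f$, and so the composite $(j,f)$, is a closed embedding. Taking $U := W$ exhibits the asserted factorization $X \stackrel{i}{\hookrightarrow} U \times Y \stackrel{\pi}{\to} Y$.

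I expect the genuinely content-bearing step to be the reduction itself: manufacturing the ample bundle $\Mc$ and passing through Theorem \fref{AmpleMeansQProj}, whose proof carries the characteristic-zero lifting of sections and is where the finiteness of $tX$ is really used. By contrast, the two closed-embedding verifications at the end are formal and, via detection on truncations, reduce to the corresponding classical statements about graphs and base change.
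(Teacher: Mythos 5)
Your proposal follows essentially the same route as the paper: twist $\Li$ by the pullback of an ample line bundle on $Y$, check ampleness of the result on the truncation, and then invoke Theorem \fref{AmpleMeansQProj} to obtain the embedding into projective space. The only difference is that you spell out the final step (that $(j,f):X\to W\times Y$ is a closed embedding via the graph and separatedness of $Y$) more carefully than the paper does, which is a welcome addition rather than a divergence.
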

\begin{proof}
Let $\Mc$ be an ample line bundle on $Y$. As the truncation $\pi_0(\Li \otimes f^* \Mc)$ on $tX$ is known to be ample, we conclude that $\Li \otimes f^* \Mc$ is ample. Hence, by \fref{AmpleMeansQProj}, we can conclude that for $j$ large enough there are enough global sections of $\Li^{\otimes j} \otimes \Mc^{\otimes j}$ to determine a locally closed embedding $i: X \hookrightarrow \Proj^n$. Restricting the target of $i$ to $U$ so that $X \hookrightarrow U$ is a closed embedding, we obtain the desired factorization
$$X \hookrightarrow U \times Y \to Y$$
of $f$. This finishes the proof of Theorem \fref{RelAmpleMeansQProj}.
\end{proof}

\section{$K^0$ of Derived Schemes}

Recall how the $K$-theory of a derived scheme is defined in \cite{KST}: start with the stable $\infty$-category $\Perf(X)$ of the perfect sheaves on $X$, and use the universal construction of \cite{BGT} to arrive at the corresponding non-connective $K$-theory spectrum $K(X) := K(\Perf(X))$. We denote the zeroth homotopy group of this spectrum by $K^0(X)$. It is known that it is the free Abelian group on the equivalence classes of perfect sheaves modulo the relations
\begin{equation*}
[\Gc] = [\Fc] + [\Hc]
\end{equation*}
coming from homotopy cofibre sequences $\Fc \to \Gc \to \Hc$. Indeed, this holds for the corresponding Waldhausen $K$-theory, and by Corollary of \cite{BGT}, the connective spectrum constructed in \emph{loc. cit.} agrees with the Waldhausen $K$-theory spectrum. Moreover, by \emph{loc. cit.} Remark 9.33 the zeroth homotopy groups of the connective and non-connective spectrum agree as $\Perf(X)$ is idempotent complete. 

The main purpose of this section is to give an alternative description of $K^0(X)$ for a quasi-projective derived scheme $X$. Namely $K^0(X)$ is the free Abelian group on equivalence classes of vector bundles of $X$ modulo relations
\begin{equation*}
[E] = [E'] + [\overline{E}]
\end{equation*} 
coming from short exact sequences $E' \to E \to \overline{E}$ in the sense of Definition \fref{StrongExact}. This description is used in \cite{An} to establish a connection between $K^0$ and algebraic cobordism $\Omega^*$ for all quasi-projective derived schemes $X$. Recall that for quasi-projective derived schemes $X$ the perfect sheaves coincide with the compact objects in $\QCoh(X)$.

\subsubsection*{Resolving Perfect Sheaves}

We recall the following standard definition:

\begin{defn}
We say that a quasi-coherent sheaf $\Fc$ on a derived scheme $X$ has \emph{Tor-amplitude} $\leq n$ if for all discrete quasi-coherent sheaves $\Gc$, the homotopy sheaves of $\Fc \otimes^L \Gc$ are concentrated in degrees $\leq n$. (cf. \cite{Lur2} Definition 7.2.4.21.)
\end{defn}

Recall also the following standard result.

\begin{prop}
A perfect sheaf has finite Tor-amplitude.
\end{prop}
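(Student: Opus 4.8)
The plan is to reduce the statement to a purely local and algebraic assertion and then argue by a formal generation principle. First I would observe that having finite Tor-amplitude is a local condition: both $\otimes^L$ and the formation of homotopy sheaves commute with restriction to affine opens, so for a global discrete $\Gc$ one has $\pi_i(\Fc \otimes^L \Gc)\vert_{\Spec(A)} \simeq \pi_i\big(\Fc\vert_{\Spec(A)} \otimes^L \Gc\vert_{\Spec(A)}\big)$, and $\Gc\vert_{\Spec(A)}$ is again discrete. Hence if every restriction $\Fc\vert_{\Spec(A)}$ has finite Tor-amplitude as an $A$-module (tested against all discrete $A$-modules), the homotopy sheaves of $\Fc \otimes^L \Gc$ vanish in high degrees locally, and therefore globally. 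Since $X$ is quasi-projective, hence quasi-compact, we may use a finite affine cover and take the maximum of the local bounds to obtain a uniform $n$. Thus the claim reduces to the algebraic statement: a perfect module $M$ over a connective derived ring $A$ has finite Tor-amplitude.

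Next I would isolate the relevant closure properties. Let $\Tc$ denote the full subcategory of $A$-modules of finite Tor-amplitude, i.e. those $M$ with $\pi_i(M \otimes^L_A \Gc) = 0$ for $i$ sufficiently large and all discrete $\Gc$. I claim $\Tc$ is closed under shifts, finite homotopy colimits, and retracts. Closure under shifts is immediate, since $M[k] \otimes^L_A \Gc \simeq (M \otimes^L_A \Gc)[k]$ merely translates the vanishing range by $k$. For finite colimits it suffices to treat a single cofibre sequence $M' \to M \to M''$: tensoring with a discrete $\Gc$ yields a cofibre sequence and hence a long exact sequence on homotopy sheaves, so if $M'$ and $M''$ have Tor-amplitude $\leq n$ then so does $M$. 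Finally, if $M$ is a retract of some $N \in \Tc$, then each $\pi_i(M \otimes^L_A \Gc)$ is a direct summand of $\pi_i(N \otimes^L_A \Gc)$, and the vanishing range is inherited.

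It then remains only to note that the free module $A$ lies in $\Tc$: indeed $A \otimes^L_A \Gc \simeq \Gc$ is concentrated in degree $0$ for discrete $\Gc$, so $A$ has Tor-amplitude $\leq 0$. By definition a perfect $A$-module is a retract of a finite homotopy colimit of shifts of $A$; since the generator $A$ lies in $\Tc$ and all three operations preserve $\Tc$, every perfect module lies in $\Tc$ and therefore has finite Tor-amplitude. Concretely, a finite colimit of the shifts $A[k_1], \dots, A[k_m]$ has Tor-amplitude bounded by $\max_j k_j$, and passing to a retract does not increase this bound.

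The only genuinely delicate point is the first, locality-and-reduction step: one must make sure that restricting to a finite affine cover and testing against discrete modules detects all of the high-degree homotopy, so that no unbounded accumulation can be hidden by the gluing or seen only against non-discrete $\Gc$. Once this is granted, the remaining argument is the formal generation principle above, which I expect to be entirely routine.
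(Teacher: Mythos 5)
Your argument is correct, and it is worth noting that the paper does not actually prove this proposition at all: it simply cites \cite{Lur2}, Proposition 7.2.4.23 (4). What you have written out is, in essence, the standard argument underlying that reference, so you have supplied a self-contained proof where the paper defers to the literature. Both of your steps are sound. The locality reduction works in the direction you need it: homotopy sheaves are computed locally, the restriction of a discrete quasi-coherent sheaf to an affine open is a discrete module, and quasi-compactness lets you take the maximum of finitely many local bounds. The generation principle is also fine; the only point worth making explicit is that in a stable $\infty$-category every finite homotopy colimit can be built from finite coproducts and cofibres, so your single-cofibre-sequence argument (via the long exact sequence on homotopy groups after tensoring) together with the obvious additivity for finite direct sums really does cover all finite colimits. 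One tiny quibble: your concluding sentence that the Tor-amplitude of a finite colimit of $A[k_1],\dots,A[k_m]$ is bounded by $\max_j k_j$ is slightly too glib as stated (the bound one actually extracts from the inductive cofibre argument is simply \emph{some} finite bound determined by the shifts appearing), but this does not affect the proof, which only needs finiteness.
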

\begin{proof}
See  \cite{Lur2} Proposition 7.2.4.23. (4).
\end{proof}

We are also going to use the following
\begin{lem}
Let $\Fc$ be a connective perfect sheaf on a Noetherian derived scheme. Now $\Fc$ is vector bundle if and only if it has Tor-amplitude $\leq 0$.
\end{lem}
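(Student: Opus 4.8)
The plan is to prove both implications locally, reducing to the case of an affine derived scheme $\Spec(A)$ with $A$ a connective Noetherian derived ring and $\Fc$ given by a connective perfect $A$-module $M$; both being a vector bundle (Example \fref{VectorBundles}) and having Tor-amplitude $\le 0$ can be checked on an affine cover, and strength itself is local. The forward direction is then immediate: a vector bundle is locally free, so after passing to an affine open on which $M \simeq A^{\oplus r}$ one computes $M \otimes^L_A N \simeq N^{\oplus r}$ for every discrete $N$, which is discrete and hence concentrated in degree $0$.

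For the converse, suppose $M$ is connective, perfect, and of Tor-amplitude $\le 0$. First I would identify $\pi_0(M)$. The base change $M \otimes^L_A \pi_0(A)$ is perfect over $\pi_0(A)$, and since $\pi_0(A)$ is discrete, Tor-amplitude $\le 0$ forces it to be concentrated in degree $0$; thus $M \otimes^L_A \pi_0(A) \simeq \pi_0(M)$. Tensoring further with discrete $\pi_0(A)$-modules shows that $\pi_0(M)$ has Tor-amplitude $\le 0$ over $\pi_0(A)$, i.e. it is flat; being also finitely generated (as $M$ is perfect, hence almost perfect, over the Noetherian ring $\pi_0(A)$), it is finitely generated projective, of some rank $r$.

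It remains to promote this to local freeness of $M$ itself. Since a finitely generated projective module over the Noetherian ring $\pi_0(A)$ is Zariski-locally free, I would pass to an affine open on which $\pi_0(M) \cong \pi_0(A)^{\oplus r}$, lift a basis to a map $\phi: A^{\oplus r} \to M$ that is an isomorphism on $\pi_0$, and set $C = \mathrm{cofib}(\phi)$. The cofibre $C$ is perfect and $1$-connective. Base-changing the cofibre sequence $A^{\oplus r} \to M \to C$ along $A \to \pi_0(A)$ and using the identification above shows $C \otimes^L_A \pi_0(A) \simeq 0$, because $\pi_0(\phi)$ is an isomorphism. A derived Nakayama argument then forces $C = 0$: if $C \ne 0$, its lowest nonvanishing homotopy group $\pi_n(C)$ with $n \ge 1$ satisfies $\pi_n(C \otimes^L_A \pi_0(A)) \cong \pi_n(C) \ne 0$ by right-exactness of base change, contradicting the vanishing. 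Hence $\phi$ is an equivalence and $M \simeq A^{\oplus r}$, so $M$ is locally free, i.e. a vector bundle.

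The main obstacle is this last step, passing from projectivity of $\pi_0(M)$ to strength (local freeness) of $M$. The point is that Tor-amplitude $\le 0$ is exactly strong enough to kill the cofibre $C$ after restriction to the truncation, and the derived Nakayama lemma converts this into the vanishing of $C$ itself. One must take care that $C$ is connective and that base change to $\pi_0(A)$ detects its bottom homotopy group; the Noetherian and perfectness hypotheses supply the finiteness needed for these homotopy groups to be finitely generated and for the classical ``finitely generated flat implies projective'' step to apply.
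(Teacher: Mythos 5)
Your proof is correct, and its skeleton matches the paper's: localize to the affine case, show that $\pi_0(M)$ is finitely generated (perfect $\Rightarrow$ almost perfect $\Rightarrow$ finitely generated homotopy over a Noetherian base) and flat over $\pi_0(A)$, hence projective. Where you genuinely diverge is in the final passage from projectivity of $\pi_0(M)$ to local freeness of $M$ itself. The paper disposes of this step entirely by citation: by \cite{Lur2} Remark 7.2.4.22 a connective module has Tor-amplitude $\leq 0$ if and only if it is flat, and by Lemma 7.2.2.18 of \emph{loc.\ cit.} a flat module is projective if and only if its $\pi_0$ is projective over $\pi_0(A)$. You instead reprove the relevant special case of that lemma by hand: lift a basis of $\pi_0(M)$ to a map $\phi: A^{\oplus r} \to M$, observe that the cofibre $C$ is $1$-connective and satisfies $C \otimes^L_A \pi_0(A) \simeq 0$, and kill $C$ by noting that the bottom nonvanishing homotopy group of an $n$-connective module survives base change, i.e.\ $\pi_n(C \otimes^L_A \pi_0(A)) \cong \pi_n(C) \otimes_{\pi_0(A)} \pi_0(A) \cong \pi_n(C)$ --- which is the precise content behind your appeal to ``right-exactness'' and should be stated in that form. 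Your route buys self-containedness (no black-box input beyond connectivity of tensor products) at the cost of length; the paper's buys brevity at the cost of two external references. Both arguments use the Noetherian hypothesis only to upgrade ``finitely generated flat'' to ``projective'', and since almost perfectness already yields finite presentation of $\pi_0(M)$, neither truly needs it. The one point you should make explicit rather than leave implicit is that Tor-amplitude $\leq 0$ may be checked affine-locally: homotopy sheaves are computed locally, and a discrete module on an affine open extends to a discrete quasi-coherent sheaf by pushforward along the (affine) open immersion.
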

\begin{proof}
The question is clearly local, so we are reduced to the corresponding question concerning modules. By \cite{Lur2} Remark 7.2.4.22. $\Fc$ is flat if and only if it has Tor amplitude $\leq 0$ and by Lemma 7.2.2.18. of \emph{loc. cit.}, a flat $A$-module $M$ is projective if and only if the flat $\pi_0(A)$-module $\pi_0(M)$ is projective (of course, projective implies flat). Due to our Noetherian assumption, this is equivalent to requiring $\pi_0$ to be finitely generated, which is taken care of by \emph{loc. cit.} Propositions 7.2.4.11. and 7.2.4.17. stating that all perfect modules are almost perfect and that almost perfect module has finitely presented homotopy.
\end{proof}

Suppose that $\Fc$ is a connective perfect sheaf on quasi-projective derived scheme $X$, By the results of the previous section, for large enough $n$, we have a morphism $\Oc^{\oplus m}_X \to \Fc(n)$ inducing surjection on $\pi_0$. Hence, given a connective perfect sheaf $\Fc$ on $X$, we can always find a morphism $F \to \Fc$ from a vector bundle $F$ so that the induced map $\pi_0(F) \to \pi_0(\Fc)$ is an epimorphism. The homotopy fibre $\Kc$ of such a morphism is connective and perfect. Moreover, if $\Fc$ had Tor-amplitude $\leq n$, where $n$ is positive, then $\Kc$ has Tor-amplitude $\leq n-1$ (cf. \cite{Lur2} Remark 7.2.4.24.)

We can continue the process above so that we end up with a resolution
\begin{equation*}
F_d \to \cdots \to F_1 \to F_0 \to \Fc
\end{equation*}
If we denote by $\Kc_d$ the vector bundle $F_d$, and by $\Kc_{i-1}$ the homotopy cofibre of $\Kc_{i} \to F_{i-1}$, then the sequence is characterized by the fact that $\Kc_1 \to F_0 \to \Fc$ is a homotopy cofibre sequence. As shifting corresponds to multiplying with $-1$ in $K^0(X)$, we have shown that
\begin{equation*}
[\Fc] = \sum_{i=0}^d (-1)^i[F_i] \in K^0(X)
\end{equation*}
and therefore we can conclude that the classes of vector bundles generate the $K^0$ of a quasi-projective derived scheme.

\subsubsection*{Presentation of $K^0$}

Consider now the Abelian group $K^0_\mathrm{vect}(X)$ generated by the equivalence classes $[E]$ of vector bundles on $X$ modulo relations  
\begin{equation*}
[E] = [E'] + [\overline{E}]
\end{equation*}
coming from exact sequences $E' \to E \to \overline{E}$ in the sense of \fref{StrongExact}. By Proposition \fref{ExactVsCofibre} we have a map $K^0_\mathrm{vect}(X) \to K^0(X)$, and by the previous paragraph this morphism is surjective. Our next task is to prove the main theorem of this section:

\begin{thm}\label{KTheoryTheorem}
The map
\begin{equation*}
K^0_\mathrm{vect}(X) \to K^0(X)
\end{equation*}
defined above is an isomorphism.
\end{thm}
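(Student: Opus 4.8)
The plan is to construct an explicit inverse to the map $\phi : K^0_\mathrm{vect}(X) \to K^0(X)$. Since $\phi$ is already known to be surjective, it suffices to produce a group homomorphism $\psi : K^0(X) \to K^0_\mathrm{vect}(X)$ with $\psi \circ \phi = \mathrm{id}$, as this exhibits $\phi$ as injective and hence bijective. The homomorphism $\psi$ should be the \emph{Euler characteristic} map: to a connective perfect sheaf $\Fc$ one assigns $\chi(\Fc) := \sum_{i=0}^d (-1)^i [F_i]$, where $F_\bullet \to \Fc$ is any of the finite vector-bundle resolutions produced above. The entire content of the proof is to show that $\chi$ is independent of the chosen resolution and additive on homotopy cofibre sequences; granting this, $\chi$ descends to the desired $\psi$, and the theorem follows at once, since $\psi(\phi[E]) = \chi(E) = [E]$ for a vector bundle $E$, which resolves itself.

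I would establish well-definedness and additivity simultaneously, by induction on the Tor-amplitude. The base case is Tor-amplitude $\leq 0$: by the lemma above such a connective perfect sheaf is a vector bundle, for which $\chi(E) = [E]$ and additivity is, via Proposition \fref{ExactVsCofibre}, exactly the defining relation of $K^0_\mathrm{vect}(X)$. For the inductive step the crucial observation is a comparison lemma: if $p : F \to \Fc$ and $p' : F' \to \Fc$ are maps from vector bundles that are surjective on $\pi_0$, with homotopy fibres $\Kc, \Kc'$, then writing $\Lc$ for the fibre of $(p,p') : F \oplus F' \to \Fc$, the octahedral axiom produces cofibre sequences $\Kc \to \Lc \to F'$ and $\Kc' \to \Lc \to F$. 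Since $\Kc, \Kc', \Lc$ have Tor-amplitude one smaller than that of $\Fc$, the inductive additivity gives $\chi(\Lc) = \chi(\Kc) + [F']$ and $\chi(\Lc) = \chi(\Kc') + [F]$, whence $[F] - \chi(\Kc) = [F'] - \chi(\Kc')$. Thus the single-step quantity $[F] - \chi(\mathrm{fib})$ does not depend on the resolving bundle, and unwinding the resolution yields well-definedness of $\chi(\Fc)$ at the current Tor-amplitude.

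Additivity at the current level is then proved by a derived horseshoe lemma. Given a cofibre sequence $\Fc \to \Gc \to \Hc$ of connective perfect sheaves, one chooses surjections $F_0 \twoheadrightarrow \Fc$ and $H_0 \twoheadrightarrow \Hc$, lifts $H_0 \to \Hc$ along $\Gc \to \Hc$, and combines to obtain $F_0 \oplus H_0 \twoheadrightarrow \Gc$ whose fibre fits into a cofibre sequence $\Kc_\Fc \to \Kc_\Gc \to \Kc_\Hc$ of sheaves of smaller Tor-amplitude; additivity then follows from the inductive hypothesis together with $[F_0 \oplus H_0] = [F_0] + [H_0]$. To pass from connective perfect sheaves to all of $K^0(X)$, I would use that every perfect sheaf is eventually connective, hence a shift of a connective one, and that $\chi(\Fc[1]) = -\chi(\Fc)$ (apply additivity to $\Fc \to 0 \to \Fc[1]$); shifting a cofibre sequence of perfect sheaves simultaneously into the connective range shows the assignment descends to a well-defined homomorphism $\psi$ on $K^0(X)$.

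The main obstacle is the lifting step in the horseshoe construction: the lift of $H_0 \to \Hc$ along the surjection $\Gc \to \Hc$ need not exist for an arbitrary vector bundle $H_0$, the obstruction living in $\pi_{-1}\,\mathrm{Map}(H_0, \Fc)$. I would remove it by exploiting the freedom, guaranteed by well-definedness, to choose the resolving bundles: taking $H_0$ to be a sufficiently negative twist $\Oc_X(-N)^{\oplus k}$ identifies the obstruction group with higher cohomology of a twist of $\Fc$, which vanishes for $N \gg 0$ by the Serre-type vanishing underlying Theorem \fref{TwistingTheorem}. This is where the ample line bundle genuinely enters, and controlling these negative homotopy groups of the section spectrum is the most delicate part of the argument; the remainder is bookkeeping with the octahedral axiom and the two nested inductions.
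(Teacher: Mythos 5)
Your overall architecture --- an Euler-characteristic inverse defined by finite vector-bundle resolutions, with well-definedness and additivity established by a joint induction on Tor-amplitude --- is exactly the paper's, and your comparison of two resolving bundles $F, F'$ through $F \oplus F'$ via the octahedral axiom is the same device as the paper's Lemma \fref{TechnicalLemma} combined with its observation that $F_0 \oplus F_0'$ dominates both choices. The one place you genuinely diverge is the additivity (horseshoe) step. Given a cofibre sequence $\Fc \to \Gc \to \Hc$, the paper does not resolve $\Hc$ and lift: it chooses $F_0 \to \Fc$ surjective on $\pi_0$, composes to get $F_0 \to \Gc$, and enlarges to $F_0 \oplus F_0' \to \Gc$ surjective on $\pi_0$; since the composite $F_0 \to \Gc \to \Hc$ is null, the induced map on horizontal cofibres $F_0' \to \Hc$ is \emph{automatically} a $\pi_0$-surjection, and the vertical fibres form a cofibre sequence of smaller Tor-amplitude. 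This \emph{resolve left-to-right} trick means the lifting obstruction you identify in $\pi_{-1}\,\mathrm{Map}(H_0,\Fc)$ simply never arises, so no Serre-type vanishing is needed at this stage. Your fix for that obstruction is nevertheless correct: with $H_0 = \Oc_X(-N)^{\oplus k}$ the descent spectral sequence identifies $\pi_{-1}\,\mathrm{Map}(H_0,\Fc)$ with a finite collection of groups $H^{q+1}(tX;\pi_q(\Fc)(N))$, all of which vanish for $N \gg 0$; so your proof goes through, it just spends effort on a difficulty that a better choice of resolution avoids. What the paper's version buys is that the ample line bundle is only ever invoked to produce $\pi_0$-surjections from vector bundles (Corollary \fref{GlobalGeneration}), never to kill negative homotopy groups of section spectra; what yours buys is a statement closer to the classical horseshoe lemma, at the cost of making the "most delicate part" a step that could have been free.
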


We want to show that the above map has an inverse, which we define by sending a perfect sheaf $\Fc$ to the $K^0_\mathrm{vect}$-class of its resolution. We need a sequence of lemmas to show that this is well defined. Denote by $\Zc^0(X)$ the free Abelian group on equivalence classes of perfect sheaves.

\begin{lem}
The map $\Zc^0(X) \to K^0_\mathrm{vect}(X)$ defined by sending
\begin{equation*}
[\Fc] \mapsto \sum_{i=0}^d (-1)^i[F_i] 
\end{equation*}
does not depend on the chosen resolution of $\Fc$.
\end{lem}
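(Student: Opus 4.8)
The goal is to show that the map $\Zc^0(X) \to K^0_\mathrm{vect}(X)$ sending $[\Fc] \mapsto \sum_{i=0}^d (-1)^i [F_i]$ is independent of the chosen vector bundle resolution $F_\bullet \to \Fc$. The natural strategy is to compare two arbitrary resolutions by building a third resolution dominating both, thereby reducing the problem to two standard moves: (a) comparing a resolution with one obtained by "extending it by one more step", and (b) showing that the alternating sum is invariant under such extensions via the exact-sequence relations defining $K^0_\mathrm{vect}(X)$.

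First I would set up the comparison. Given two resolutions $F_\bullet \to \Fc$ and $G_\bullet \to \Fc$, the plan is to lift the identity on $\Fc$ to a map of resolutions. Concretely, since each $F_i$ (and the iterated kernels $\Kc_i$) are connective and the $G_\bullet$ surject appropriately on $\pi_0$, I expect to construct a chain map $F_\bullet \to G_\bullet$ over $\mathrm{id}_\Fc$ by inductively lifting along the epimorphisms $\pi_0(G_i) \to \pi_0(\Kc_{i-1})$, using that maps out of vector bundles into a sheaf are controlled by the $\pi_0$-level data for epimorphisms (as in Example \fref{VectorBundles} and the surjectivity discussion preceding \fref{KTheoryTheorem}). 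A cleaner route, which I would prefer, is to form the fibre product resolution: produce a common refinement $H_\bullet$ that simultaneously maps to both $F_\bullet$ and $G_\bullet$ over $\Fc$. This reduces the general case to comparing a resolution with a refinement of it.

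Next I would handle the key invariance move. The essential point is that if one takes a resolution $F_\bullet \to \Fc$ and modifies it — for instance by replacing a term $F_i$ with an extension $F_i \oplus V$ where $V$ is a vector bundle appearing with a compensating copy in $F_{i+1}$ and $F_{i-1}$ — then the alternating sum $\sum (-1)^i [F_i]$ is unchanged in $K^0_\mathrm{vect}(X)$. This is where the exact-sequence relations enter: each such elementary modification contributes a term of the form $[F_i \oplus V] - [F_i] - [V] = 0$, which vanishes because $V \to F_i \oplus V \to F_i$ is a short exact sequence of vector bundles (in the sense of \fref{StrongExact}), so the relation $[E] = [E'] + [\overline E]$ applies. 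I would assemble the general comparison from these elementary moves, tracking the signs carefully so that the contributions telescope and cancel in pairs across adjacent homological degrees.

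The main obstacle will be the lifting step: ensuring that two resolutions can always be compared, and that the comparison map itself can be chosen so that its cofibres are again vector bundles fitting into short exact sequences. The difficulty is homotopy-theoretic rather than diagrammatic — one must lift maps of perfect sheaves through vector bundles coherently, and the cofibres of the comparison maps need to be shown to be vector bundles (not merely connective perfect sheaves). I expect this to follow from the characterization in Example \fref{VectorBundles}, namely that the homotopy fibre of a surjective map of vector bundles is again a vector bundle, combined with the Tor-amplitude bookkeeping from the resolution construction: at each stage the relevant kernel drops Tor-amplitude, so it eventually becomes a genuine vector bundle. Once the comparison maps are in hand with vector-bundle cofibres, the invariance of the alternating sum is a formal consequence of the defining relations, and the lemma follows.
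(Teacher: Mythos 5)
Your overall strategy (dominate two resolutions by a third, then cancel the discrepancy using exact sequences of vector bundles) is the same one the paper uses, but two of your concrete steps have genuine problems. First, the ``fibre product resolution'': the homotopy fibre product $F_0 \times^h_{\Fc} G_0$ of two vector bundles over a perfect sheaf $\Fc$ is not a vector bundle in general --- its homotopy sheaves pick up contributions from $\pi_{*+1}(\Fc)$ via the Mayer--Vietoris sequence of the pullback --- so it cannot serve as the degree-zero term of a common refinement. The correct domination, which the paper uses, is $F_0 \oplus G_0$: one lifts $G_0 \to \Fc$ through the $\pi_0$-surjection $F_0 \to \Fc$ (possible because $G_0$ is a vector bundle) to obtain a surjection $F_0 \oplus G_0 \to F_0$ over $\Fc$, and symmetrically for $G_0$.

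Second, and more seriously, the reduction of the comparison to ``elementary modifications'' $F_i \rightsquigarrow F_i \oplus V$ with compensating copies in adjacent degrees is asserted rather than proved, and this reduction is essentially the entire content of the lemma. You correctly identify the obstacle --- that the comparison maps must be chosen so that their fibres are vector bundles fitting into short exact sequences --- but only say you ``expect'' this to work. The paper avoids building a full chain map of resolutions altogether: it inducts on the Tor-amplitude of $\Fc$, compares only the degree-zero terms $F_0$ and $F_0'$ in the dominated situation where one surjects onto the other over $\Fc$ (so that the fibre $K$ of $F_0 \to F_0'$ is a vector bundle by Example \fref{VectorBundles}), and uses Lemma \fref{TechnicalLemma} to show that replacing $F_0$ by $F_0 \oplus K$ changes the class of the tail of the resolution by exactly $[K]$; the inductive hypothesis then applies to the kernels $\Kc$ and $\Kc'$, which have strictly smaller Tor-amplitude. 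To salvage your telescoping version you would need to actually construct, by induction on degree, a degreewise $\pi_0$-surjective map of resolutions over $\mathrm{id}_{\Fc}$, prove that its degreewise fibres form an acyclic bounded complex of vector bundles, and split that complex into short exact sequences in the sense of Definition \fref{StrongExact} --- at which point you would have reproduced the paper's Tor-amplitude bookkeeping in a different order.
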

\begin{proof}
If $\Fc$ is already a vector bundle, then any resolution gives, by construction, a long exact sequence
\begin{equation*}
F_r \to \cdots \to F_1 \to F_0 \to \Fc 
\end{equation*}
of vector bundles. Hence a standard argument shows that $[\Fc] = \sum_i (-1)^i [F_i]$ in $K^0_\mathrm{vect}(X)$, and thus the image is well defined. Suppose then $\Fc$ has tor amplitude $\leq d+1$, and consider two beginnings of a resolution $F_0 \to \Fc$ and $F'_0 \to \Fc$. Consider the case when the vector bundle $F_0$ surjects onto $F'_0$ over $\Fc$ so that we obtain the following diagram
\begin{center}
\begin{tikzpicture}[scale = 1.5]
\node (C2) at (1,3) {$0$};
\node (C1) at (0,3) {$K$};
\node (C0) at (-1,3) {$K$};
\node (A2) at (1,2) {$\Fc$};
\node (B2) at (1,1) {$\Fc$};
\node (A1) at (0,2) {$F_0$};
\node (B1) at (0,1) {$F'_0$};
\node (A0) at (-1,2) {$\Kc$};
\node (B0) at (-1,1) {$\Kc'$};
\path[every node/.style={font=\sffamily\small}]
(C0) edge[->] (A0)
(C1) edge[->] (A1)
(C2) edge[->] (A2)
(C0) edge[->] (C1)
(C1) edge[->] (C2)
(A0) edge[->] (A1)
(B0) edge[->] (B1)
(A1) edge[->] (A2)
(B1) edge[->] (B2)
(A0) edge[->] (B0)
(A1) edge[->] (B1)
(A2) edge[->] (B2)
;
\end{tikzpicture}
\end{center}
whose rows and columns are all homotopy cofibre sequences. Now $K$ is a vector bundle as the homotopy fibre of a surjective map of vector bundles. By induction on Tor-amplitude the perfect sheaves $\Kc$ and $\Kc'$ have well defined images in $K^0_\mathrm{vect}$, and by Lemma \fref{TechnicalLemma} the images differ by $[K]$, which is exactly the difference of $F_0$ and $F'_0$. This shows that the $K^0_\mathrm{vect}$-class of a resolution does not depend on the choice of the first vector bundle, at least in the case where one choice surjects onto the other. As $F_0 \oplus F_0'$ dominates both $F_0$ and $F_0'$, we are done.
\end{proof}

\begin{lem}\label{TechnicalLemma}
Let us have a homotopy cofibre sequence $K \to \Fc \to \Fc'$ of connective perfect sheaves, and suppose $K$ is a vector bundle. Choose a resolution
\begin{equation*}
\Kc_{d-1} \to F_{d-1} \to \cdots \to F_0 \oplus K \to \Fc.
\end{equation*}
Now the induced sequence
\begin{equation*}
\Kc_{d-1} \to F_{d-1} \to \cdots \to F_0 \to \Fc'
\end{equation*}
is a resolution of $\Fc'$.
\end{lem}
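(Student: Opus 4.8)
The plan is to reduce everything to the single claim that the given resolution of $\Fc$ and the induced sequence share the \emph{same} first syzygy; since the two sequences then have the identical tail $\Kc_{d-1} \to \cdots \to F_1$, the statement becomes formal. Throughout write $\alpha\colon K \to \Fc$ for the map of the cofibre sequence $K \xrightarrow{\alpha} \Fc \xrightarrow{q} \Fc'$, let $i,j$ be the two summand inclusions into $F_0 \oplus K$ and $\mathrm{pr}_{F_0}$ the projection, and let $p\colon F_0 \oplus K \to \Fc$ be the augmentation of the given resolution, set up (as the notation suggests) so that $p\circ j \simeq \alpha$. The induced augmentation is then $\beta := q\circ p\circ i\colon F_0 \to \Fc'$.

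First I would record two elementary facts. Because $q\circ\alpha$ is nullhomotopic, the composite $q\circ p\colon F_0\oplus K \to \Fc'$ kills the $K$-summand, so $q\circ p \simeq \beta\circ\mathrm{pr}_{F_0}$. Next, $\beta$ is a legitimate augmentation: $\pi_0(q)$ is surjective (as $\pi_{-1}(K)=0$ forces $\pi_0(\Fc)\to\pi_0(\Fc')$ to be onto), $\pi_0(p)$ is surjective, and $\pi_0(q)\pi_0(\alpha)=\pi_0(q\alpha)=0$; combining these shows $\mathrm{im}\,\pi_0(\beta) = \pi_0(q)\big(\mathrm{im}\,\pi_0(p\circ i)\big) = \pi_0(\Fc')$, so $\pi_0(\beta)$ is surjective and $\Kc_1' := \mathrm{fib}(\beta)$ is connective and perfect.

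The heart of the argument is to identify the first syzygy $\Kc_1 := \mathrm{fib}(p)$ of the given resolution with $\Kc_1'$. For this I would assemble a map of cofibre sequences whose top row is the split sequence $K \xrightarrow{j} F_0\oplus K \xrightarrow{\mathrm{pr}_{F_0}} F_0$, whose bottom row is $K \xrightarrow{\alpha} \Fc \xrightarrow{q} \Fc'$, and whose vertical maps are $\mathrm{id}_K$, $p$, and $\beta$. The left square commutes since $p\circ j \simeq \alpha$, and the right square commutes by the identity $q\circ p \simeq \beta\circ\mathrm{pr}_{F_0}$ noted above. Passing to the fibres of the three vertical maps produces, in the stable $\infty$-category $\QCoh(X)$, a cofibre sequence $\mathrm{fib}(\mathrm{id}_K) \to \mathrm{fib}(p) \to \mathrm{fib}(\beta)$, that is $0 \to \Kc_1 \to \Kc_1'$, which exhibits the induced map $\Kc_1 \to \Kc_1'$ as an equivalence. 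Moreover this equivalence is compatible with the syzygy inclusions: the map is induced by $\mathrm{pr}_{F_0}$, so $\mathrm{pr}_{F_0}\circ(\Kc_1 \hookrightarrow F_0\oplus K) \simeq (\Kc_1' \hookrightarrow F_0)\circ(\Kc_1 \xrightarrow{\sim} \Kc_1')$.

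Finally I would transport the tail. The map $F_1 \to F_0\oplus K$ of the given resolution factors through $\Kc_1 \hookrightarrow F_0\oplus K$, and $\Kc_{d-1} \to \cdots \to F_1 \to \Kc_1$ is by construction a resolution of $\Kc_1$. Composing with the projection and using the compatibility just established, the induced map $F_1 \to F_0$ factors as $F_1 \to \Kc_1 \xrightarrow{\sim} \Kc_1' \hookrightarrow F_0$; hence the unchanged tail is a resolution of $\Kc_1'$. Splicing it with the cofibre sequence $\Kc_1' \to F_0 \xrightarrow{\beta} \Fc'$ exhibits the induced sequence as a resolution of $\Fc'$, as desired. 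I expect the only genuine obstacle to be the syzygy identification of the third paragraph: it must be carried out as an equivalence in the stable $\infty$-category, tracking the induced map on fibres rather than merely matching homotopy groups, whereas the first, second, and fourth paragraphs are essentially bookkeeping.
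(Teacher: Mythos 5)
Your argument is correct and is essentially the paper's own proof: the paper draws the $3\times 3$ diagram with rows $0 \to K \to K$, $\Kc_0 \to F_0 \oplus K \to \Fc$, $\Kc_0 \to F_0 \to \Fc'$ and invokes the fact that the bottom row is again a homotopy cofibre sequence, which is exactly your ``map of cofibre sequences, pass to fibres'' step read with rows and columns transposed. Your write-up merely adds the (correct and worth recording) checks that $q\circ p \simeq \beta\circ\mathrm{pr}_{F_0}$ and that $\pi_0(\beta)$ is surjective, which the paper leaves implicit.
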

\begin{proof}
Indeed, consider the diagram
\begin{center}
\begin{tikzpicture}[scale = 1.5]
\node (C2) at (1.2,3) {$K$};
\node (C1) at (0,3) {$K$};
\node (C0) at (-1.2,3) {$0$};
\node (A2) at (1.2,2) {$\Fc$};
\node (B2) at (1.2,1) {$\Fc'$};
\node (A1) at (0,2) {$F_0 \oplus K$};
\node (B1) at (0,1) {$F_0$};
\node (A0) at (-1.2,2) {$\Kc_0$};
\node (B0) at (-1.2,1) {$\Kc_0$};
\path[every node/.style={font=\sffamily\small}]
(C0) edge[->] (A0)
(C1) edge[->] (A1)
(C2) edge[->] (A2)
(C0) edge[->] (C1)
(C1) edge[->] (C2)
(A0) edge[->] (A1)
(B0) edge[->] (B1)
(A1) edge[->] (A2)
(B1) edge[->] (B2)
(A0) edge[->] (B0)
(A1) edge[->] (B1)
(A2) edge[->] (B2)
;
\end{tikzpicture}
\end{center}
where the top and the middle rows are homotopy cofibre sequences. The claim now follows from the fact that the bottom row is also a homotopy cofibre sequence.
\end{proof}

We now know that we have a well defined map from the equivalence classes of perfect sheaves to the Grothendieck group of vector bundles. The proof of \fref{KTheoryTheorem} is finished by the following lemma.

\begin{lem}
The map $\Zc^0(X) \to K^0_\mathrm{vect}(X)$ defined above descends to a map $K^0(X) \to K^0_\mathrm{vect}(X)$.
\end{lem}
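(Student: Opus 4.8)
The plan is to show that the map $\phi\colon \Zc^0(X) \to K^0_{\mathrm{vect}}(X)$ constructed above sends the defining relations of $K^0(X)$ to zero; that is, for every homotopy cofibre sequence $\Fc \to \Gc \to \Hc$ of perfect sheaves I must verify
\[
\phi([\Gc]) = \phi([\Fc]) + \phi([\Hc]).
\]
First I would reduce to the case where all three sheaves are connective. Since perfect sheaves are eventually connective, shifting the whole cofibre sequence by $[n]$ for $n \gg 0$ makes every term connective, so it suffices to know how $\phi$ behaves under shifts. For a connective $\Fc$ the suspension $\Fc[1]$ is again connective with $\pi_0(\Fc[1]) = 0$, so the zero map $0 \to \Fc[1]$ is an admissible first step of a resolution, and its homotopy fibre is $\Fc$; the independence of $\phi$ on the chosen resolution established above then forces $\phi([\Fc[1]]) = -\phi([\Fc])$, and iterating gives $\phi([\Fc[n]]) = (-1)^n\phi([\Fc])$.

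The heart of the argument is a derived analogue of the horseshoe lemma: given a cofibre sequence $\Fc \to \Gc \to \Hc$ of connective perfect sheaves, I would build resolutions $F_\bullet \to \Fc$, $H_\bullet \to \Hc$ and $G_\bullet \to \Gc$ in which $G_i \simeq F_i \oplus H_i$ for every $i$, the split inclusions and projections realizing $F_i \to G_i \to H_i$ as (split) short exact sequences of vector bundles. Granting this, Proposition \fref{ExactVsCofibre} together with the well-definedness of $\phi$ yields
\[
\phi([\Gc]) = \sum_i (-1)^i [G_i] = \sum_i (-1)^i\bigl([F_i] + [H_i]\bigr) = \phi([\Fc]) + \phi([\Hc]).
\]

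To construct the first stage I would pick vector bundle surjections $F_0 \to \Fc$ and $H_0 \to \Hc$ on $\pi_0$, taking $H_0$ of the form $\Oc_X(-n)^{\oplus k}$; the composite $F_0 \to \Fc \to \Gc$ supplies one summand of the desired map $G_0 = F_0 \oplus H_0 \to \Gc$. The crux — and the step I expect to be the main obstacle — is producing the second summand, a lift of $H_0 \to \Hc$ along $\Gc \to \Hc$. The obstruction to such a lift lies in $\pi_{-1}\mathrm{Map}(H_0,\Fc) \cong \pi_{-1}\Gamma(X;\Fc(n))^{\oplus k}$, and here Theorem \fref{TwistingTheorem} is exactly what is needed: for $n \gg 0$ this group equals $\Gamma(tX; \pi_{-1}(\Fc)(n))^{\oplus k} = 0$ since $\Fc$ is connective. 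Choosing $n$ large enough that surjectivity (via \fref{GlobalGeneration}) and this vanishing both hold, the lift exists, and a short check on $\pi_0$ — using that $\pi_0(\Fc) \to \pi_0(\Gc) \to \pi_0(\Hc) \to 0$ is exact — shows $G_0 \to \Gc$ is again surjective on $\pi_0$.

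Finally I would pass to homotopy fibres. The map of cofibre sequences from the split row $F_0 \to G_0 \to H_0$ to the row $\Fc \to \Gc \to \Hc$ induces a cofibre sequence of homotopy fibres $K^F \to K^G \to K^H$ of connective perfect sheaves whose Tor-amplitudes have strictly dropped. Iterating the construction and inducting on Tor-amplitude — which terminates once all three fibres are vector bundles, by the lemma identifying connective perfect sheaves of Tor-amplitude $\leq 0$ with vector bundles — produces the compatible resolutions with $G_i \simeq F_i \oplus H_i$, completing the proof of \fref{KTheoryTheorem}. The only genuinely derived input beyond bookkeeping is the lifting step, where the vanishing of the negative homotopy of the twisted global sections plays the role that projectivity of the resolving bundles plays classically.
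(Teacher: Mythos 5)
Your proof is correct, and its overall skeleton --- a map of homotopy cofibre sequences from a split sequence of vector bundles $F_0 \to F_0 \oplus H_0 \to H_0$ down to $\Fc \to \Gc \to \Hc$ with all vertical maps surjective on $\pi_0$, followed by passage to homotopy fibres, induction on Tor-amplitude, and the base case handled by Proposition \fref{ExactVsCofibre} --- is the same as the paper's. The one genuine divergence is how the middle vertical map is produced. You fix a surjection $H_0 = \Oc_X(-n)^{\oplus k} \to \Hc$ first and then lift it along $\Gc \to \Hc$, killing the obstruction in $\pi_{-1}\mathrm{Map}(H_0,\Fc) \cong \pi_{-1}\Gamma(X;\Fc(n))^{\oplus k}$ via Theorem \fref{TwistingTheorem}; this is the classical horseshoe-lemma shape (resolve the outer terms, deduce the middle) and it makes explicit the derived input --- vanishing of negative homotopy of twisted global sections --- that replaces projectivity. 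The paper sidesteps the lifting problem entirely: it chooses a bundle $F_0'$ mapping \emph{directly} to $\Gc$ so that $F_0 \oplus F_0' \to \Gc$ is surjective on $\pi_0$, and the induced composite $F_0' \to \Hc$ is then automatically surjective on $\pi_0$ by the long exact sequence of the fibration. That route is more elementary (no obstruction theory, only Corollary \fref{GlobalGeneration}), at the cost of not controlling the resolution of $\Hc$ in advance; your route costs an extra appeal to the twisting theorem but yields the stronger statement that \emph{any} prescribed twisted generating system of $\Hc$ can be incorporated. Both arguments gloss over the same minor bookkeeping point, namely that the induction should run on the maximum of the three Tor-amplitudes, since the Tor-amplitude of the cofibre of a map need not be bounded by those of its source and target.
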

\begin{proof}
We need to show that the map respects the relations given by cofibre sequences. Suppose we have a map $\Fc \to \Gc$ of connective perfect sheaves. Choose a vector bundle $F_0 \to \Fc$ inducing an epimorphism on $\pi_0$, and a vector bundle $F'_0$, so that $F_0 \oplus F'_0 \to \Gc$ induces an epimorphism on $\pi_0$. We now have the following diagram
\begin{center}
\begin{tikzpicture}[scale = 1.5]
\node (C2) at (1.2,3) {$\Kc''$};
\node (C1) at (0,3) {$\Kc'$};
\node (C0) at (-1.2,3) {$\Kc$};
\node (A2) at (1.2,2) {$F'_0$};
\node (B2) at (1.2,1) {$\Hc$};
\node (A1) at (0,2) {$F_0 \oplus F'_0$};
\node (B1) at (0,1) {$\Gc$};
\node (A0) at (-1.2,2) {$F_0$};
\node (B0) at (-1.2,1) {$\Fc$};
\path[every node/.style={font=\sffamily\small}]
(C0) edge[->] (A0)
(C1) edge[->] (A1)
(C2) edge[->] (A2)
(C0) edge[->] (C1)
(C1) edge[->] (C2)
(A0) edge[->] (A1)
(B0) edge[->] (B1)
(A1) edge[->] (A2)
(B1) edge[->] (B2)
(A0) edge[->] (B0)
(A1) edge[->] (B1)
(A2) edge[->] (B2)
;
\end{tikzpicture}
\end{center}
where all the rows and columns are homotopy cofibre sequences. Moreover $F'_0 \to \Hc$ induces a surjection on $\pi_0$, as can be readily seen from the fibration sequence on homotopy groups. 

Hence, if we have a homotopy cofibre sequence $\Fc \to \Gc \to \Hc$ of perfect sheaves of Tor-amplitude $\leq n + 1$, then the above observation reduces the question to cofibre sequence of perfect sheaves having Tor-amplitude $\leq n$. Recalling that a triangle of vector bundles is a cofibre sequence if and only if it is an exact sequence (Proposition \fref{ExactVsCofibre}), we see that the map $\Zc^0(X) \to K^0_\mathrm{vect}(X)$ respects cofibre sequence-relation, and therefore we obtain a map $K^0(X) \to K^0_\mathrm{vect}(X)$. This finishes the proof of \fref{KTheoryTheorem}.
\end{proof}

\end{document}